\documentclass[a4paper]{article}

\usepackage{a4wide}
\usepackage{amsmath,amssymb,amsthm}
\usepackage{mathrsfs,bbm}
\usepackage[american]{babel}
\usepackage[dvips]{graphicx}
\usepackage{array,pifont,varioref,framed}

\graphicspath{{./}{figure/}}

\title{On the numerical solution of a boundary integral equation for the exterior Neumann problem on domains with corners}
\author{Luisa Fermo \\
		{\small\it Department of Mathematics and Computer Science} \\
		{\small\it University of Cagliari} \\
		{\small\it Viale Merello 92, 09123 Cagliari, Italy} \\[  5mm]
		Concetta Laurita \\
		{\small\it Department of Mathematics, Computer Science and Economics} \\
		{\small\it University of Basilicata} \\		
		{\small\it Via dell'Ateneo Lucano 10, 85100 Potenza, Italy}
	   }
\date{}
%% Macros

\newcommand{\C}{\mathcal{C}}

\newcommand{\RR}{\mathbb{R}}
\newcommand{\NN}{\mathbb{N}}
\newcommand{\PP}{\mathbb{P}}

\def\C{{\mathcal{C}}}

\theoremstyle{plain}\newtheorem{theorem}{Theorem}[section]
\theoremstyle{remark}
\theoremstyle{plain}\newtheorem{lemma}[theorem]{Lemma}
\theoremstyle{plain}
\theoremstyle{definition}

\begin{document}
\maketitle

\begin{abstract}
The authors propose a Nystr\"om method to approximate the solution of a boundary integral equation connected with the exterior Neumann problem for Laplace's equation on planar domains with corners. They prove the convergence and the stability of the method and show some numerical tests.

\medskip

\noindent{\bf Keywords:} Boundary integral equations, Neumann problem,  Nystr\"om method

\medskip

\noindent{\bf Mathematics Subject Classification:} 65R20
\end{abstract}

\section{Introduction}
Let us consider the exterior Neumann problem for Laplace's equation
\begin{equation}\label{Neumann}
\left\{
  \begin{array}{ll}
    \Delta u(P)=0, & \hbox{$P \in \mathbb{R}^2 \setminus D$}, \\ \vspace{0.2cm}

    \displaystyle \frac{\partial u(P)}{\partial n_P}=f(P), & \hbox{$P \in \Sigma$}, \\ \vspace{0.2cm}

	|u(P)|= \mathcal{O}(|P|^{-1}), & \hbox{$|P| \to \infty$},
  \end{array}
\right.
\end{equation}
where $D$ is a simply connected bounded region in the plane with the boundary $\Sigma$, $n_P$ is the inward normal vector to $\Sigma$ at $P$, and $f$ is a given sufficiently smooth function on $\Sigma$ satisfying $$\int_\Sigma f(P) d \Sigma_P=0. $$

In order to solve (\ref{Neumann}) via boundary integral equations, one can use the Green representation formula for potential functions on exterior regions
\begin{eqnarray}\label{u}
u(A)&=&u(\infty)-\frac{1}{2\pi} \int_{\Sigma} \frac{\partial u(Q)}{\partial n_Q} \log|A-Q| d \Sigma_Q \nonumber \\ & + &\frac{1}{2\pi} \int_{\Sigma} u(Q) \frac{\partial }{\partial n_Q} \log|A-Q| d \Sigma_Q, \quad A \in \mathbb{R}^2 \setminus D,
\end{eqnarray}
getting, by standard arguments (see, for instance, \cite{A}), the boundary integral equation of the second kind
\begin{equation}\label{eq1}
-(2\pi-\Omega(P))u(P)+\int_\Sigma u(Q) \frac{\partial }{\partial n_Q} \log|P-Q| d\Sigma_Q=g(P), \quad P \in \Sigma
\end{equation}
where $\Omega(P)$ denotes the interior angle to $\Sigma$ at $P$ and
\begin{equation}\label{g}
g(P)=\int_\Sigma f(Q) \log|P-Q| d \Sigma_Q.
\end{equation}
Defining the operator
\begin{equation} \label{operatoreK}
(K u)(P)=(-\pi+\Omega(P))u(P)+\int_\Sigma u(Q) \frac{\partial}{\partial\mathbf{n}_Q} [\log{|P-Q|}] d \Sigma_{Q}, \quad P \in \Sigma
\end{equation}
which is a bounded map from $C(\Sigma)$ into $C(\Sigma)$,
one can rewrite equation (\ref{eq1}) in the following more compact operator form
\begin{equation} \label{equ_operator}
(-\pi+K)u=g.
\end{equation}
This is a boundary integral equation of direct type that, differently from those of indirect type obtained
using the potential theory approach,  presents the advantage of getting the solution  on the boundary without any further calculations. On the other hand,  it involves the integral term (\ref{g}) which is difficult to handle numerically, because of the presence of the logarithmic kernel.

We suppose that the boundary $\Sigma$ is twice continuously differentiable with the exception of corners at points $P_1,\ldots,P_n$ with interior angles
$$\alpha_k=(1-\chi_k)\pi, \quad -1<\chi_k<1, \ \chi_k \neq 0, \quad k=1,\ldots,n.$$
We remark that under this assumption, the operator $K$ is not compact. However it is possible to prove (see, for instance, \cite{AdH,ChandlerGraham}) that, if $(-\pi+K)$ is injective, then the inverse operator $(-\pi+K)^{-1}:C(\Sigma) \to C(\Sigma)$ exists and is bounded.

Several integral equation methods like collocation, Galerkin and Nystr\"om methods,
 having the purpose of approximating the solution of the Dirichlet or the Neumann problem in planar domains with corners,
are available in literature.
Most of them are based on the representation of the solution $u$ in the form of a single or double layer potential and on the resolution of the corresponding boundary integral equations
defined on piecewise smooth curves.

A variety of these methods  makes use of  piecewise polynomial appro\-xi\-mations on graded meshes
\cite{Chandler,Jeon, Kress, Rathsfeld} which if, on the one hand, allow to
achieve arbitrarily high order of convergence, on the other hand, could produce ill-conditioned linear systems as the local degree increases.

Sometimes \cite{Kress,MS} such approaches  are combined with smoothing strategies whose purpose is to improve the rate of convergence of the proposed numerical procedure.

More recently, an extensive literature on efficient numerical methods to discretize boundary integral equations connected with elliptic problems
on domains with corners has been developed (see \cite{Bremer2012_1,Bremer2012_2, BremerRokhlin, BremerRokhlinSammis, BrunoOvalTurc, Helsing2011, HelsingOjala} and the references therein).

For instance, in \cite{BrunoOvalTurc} a new algorithm for the solution of the Neumann problem for the Laplace equation is described. Since the solution of the corresponding boundary integral equation can be unbounded at the corners of the domain, the method proposes the analytical subtraction of singularities in order to get high accuracy and, also, a special treatment of nearly non-integrable integrands in such a way to avoid cancellation errors.

Methods of Nystr\"om  type based on discretization techniques \cite{BremerRokhlin, BremerRokhlinSammis}, as well as  compression and preconditioning schemes for the arising linear systems are proposed in \cite{Bremer2012_1,Bremer2012_2}. Such procedures allow one to produce well-conditioned linear systems that do not become too large for domains with piecewise smooth boundaries having a great number of corners.
Unfortunately, stability and convergence results have not been proved theoretically but only supported by numerical evidence. In fact several numerical examples show that high accuracy is achieved in the computation of the solutions.

In this paper, we do not seek the solution  in the form of a potential but we solve the boundary integral equation (\ref{eq1}) computing
directly the harmonic function $u$, at first on the boundary and then, by using (\ref{u}), on the exterior domain. This approach implies that
we have to evaluate integrals of type (\ref{g}), or approximate them if their analytical expression is not known. Then, we also propose a suitable numerical treatment of these integrals, taking into account the presence of their logarithmic kernel, and provide the related error estimate.

The first step of our method is the introduction of a suitable decomposition of the boundary in order to rewrite  equation (\ref{eq1}) as an equivalent system of integral equations. More precisely, we divide each smooth arc of the boun\-dary in three sections, choosing the two-non central parts of a very small length such that they coincide with the straight segments tangent to the curve at the corner points.
Then, we compute its solution by applying a Nystr\"om method based on global approximation on each  section of the boundary (see \cite{MS}).
The method uses, essentially, a Radau quadrature formula, based on different numbers of quadrature knots according to the different lengths of the smooth sections  involved in the adopted decomposition of the boundary.
Nevertheless, in order to be able to establish stability and convergence results, we need to modify slightly  the discrete operator, approximating the operator $K$, around the corners. We remark that this modification is not only theoretical but it is also performed numerically.

A complete analysis of the convergence and the stability of the proposed procedure is conducted, by showing that the method can be applied to any domain $D$, regardless of  the combination of interior angles.
Moreover, it is also proved that the method always leads to solve well-conditioned linear systems without resorting to preconditioning scheme.

The paper is structured as follows. In Section \ref{preliminari} we introduce some functional spaces and quadrature formulas. In Section \ref{the method} we present the method by giving convergence and stability results. Section \ref{proofs} is dedicated to the proofs. Finally, in Section \ref{tests} we show some numerical tests.

\section{Preliminaries}\label{preliminari}
\subsection{Spaces of functions}
Let us denote by $w$  a weight function on $[0,1]$ and define the space $L^p_w$, $1 \leq p < \infty$  as the set of all measurable functions such that
\[\|fw\|_p=\left(\int_{0}^{1} |f(x)w(x)|^p dx \right)^{\frac 1 p}, \quad 1 \leq p <+\infty.\]
Let us also  introduce the  Sobolev-type subspace $W^p_r(w)$ of $L_w^p([0,1])$ defined as follows
\[W^p_r(w)=\{f \in L_w^p([0,1]) \ \mid \ \|f\|_{W^p_r(w)}= \|fw\|_p +\|f^{(r)}\varphi^rw\|_p<\infty\},\]
where $r$ is a positive integer and $\varphi(x)= \sqrt{x(1-x)}$. If $w \equiv1$ we simply write $W^p_r$ instead of $W^p_r(w)$.

Finally, as usual, for $r \in \NN \cup \{\infty\}$ we denote by
$C^r([0,1])$ the set of all continuous functions with $r$ continuous derivatives  and we introduce the product space
\[ C^r([0,1])^{^m}=\underbrace{C^r([0,1]) \times C^r([0,1]) \times \ldots \times C^r([0,1])}_{m \ \mathrm{times}},\]
which is complete with the norm
\begin{equation} \label{normdirectproduct}
\|(f_1,f_2,\ldots,f_m)\|_{\infty} =\max_{i=1,2,\ldots,m} \|f_i\|_\infty.
\end{equation}
\subsection{Quadrature rules}
In this subsection we report the quadrature formulas we adopt in the numerical method and we mention some results which will be useful in the sequel.

Denoted by $\{p_m(v^{\alpha,\beta})\}_m$ the sequence of polynomials which are orthonormal on $[0,1]$ with respect to the Jacobi weight $v^{\alpha,\beta}(x)=x^\alpha (1-x)^\beta$,  let  $x_{m,k}^{\alpha,\beta}$, $k=1,\dots,m$, be the zeros of $p_m(v^{\alpha,\beta})$ and  $l_k^{\alpha,\beta}$, $k=1,\dots,m$, be the fundamental Lagrange polynomials based on these points. Then, according to this notation, the Gauss-Legendre quadrature formula \cite{DR} reads as
\begin{equation}\label{Legendre}
\int_0^1 f(x) dx=\sum_{k=1}^{m} \lambda_{m,k}^L f(x_{m,k}^L)+e_m^L(f),
\end{equation}
where  $x_{m,k}^L = x_{m,k}^{0,0}$,
$\lambda_{m,k}^L=\int_0^1 l_k^{0,0}(x) dx$, $\forall k \in \{1,\dots, m\}$,  and $e_m^L$ is the remainder term, while the Gauss-Radau formula \cite{DR} is given by
\begin{equation}\label{Radau}
\int_0^1 f(x) dx=\sum_{k=0}^{m} \lambda_{m,k}^R f(x_{m,k}^R)+e_m^R(f),
\end{equation}
with $x_{m,0}^R = 0$, $\lambda_{m,0}^R=\frac{1}{(m+1)^2}$,   $x_{m,k}^R = x_{m,k}^{0,1}$,
$\displaystyle \lambda_{m,k}^R=\int_0^1 l_k^{0,1}(x) v^{0,1}(x) dx$, $\forall k\in \{1, \dots, m\}$,  and $e_m^R$  the quadrature error.

In the next theorem we give an  estimate for $e_m^L(f)$ and $e_m^R(f)$.
To this end, we recall the definition of the weighted error of best polynomial approxi\-ma\-tion
\[E_m(f)_{w,p}=\inf_{P_m \in \PP_m} \left\|\left(f-P_m\right)w\right\|_p,\]
where $\PP_m$ is the set of all algebraic polynomials of degree at most $m$.

Moreover, in the following $\C$ denotes a positive constant which may assume different values in different formulas. We write $\C =\C(a,b,\ldots)$ to say that $\C$ is dependent of the parameters $a,b,\ldots.$ and $\C \neq \C(a,b,\ldots)$ to say that $\C$ is independent of them.
Furthermore, if $A,B > 0$ are quantities depending on some parameters, we will write $A \sim B$, if there exists a positive constant $\C$ independent of the parameters of $A$ and $B$, such that
\[ \frac{1}{\C} \leq \frac{A}{B} \leq \C.\]

\begin{theorem}\cite{MMlibro}\label{corquad}
For all $f \in W^1_r$, $r \geq 1$, it results
\begin{equation}\label{corL}
|e_m^L(f)| \leq \frac{\C}{m^r} E_{2m-1-r}\left(f^{(r)}\right)_{\varphi^r,1}
\end{equation}
and
\begin{equation}\label{corR}
|e_m^R(f)| \leq \frac{\C}{m^r} E_{2m-r}\left(f^{(r)}\right)_{\varphi^r,1},
\end{equation}
where $\varphi(x)=\sqrt{x(1-x)}$ and $\C \neq \C(m,f)$.
\end{theorem}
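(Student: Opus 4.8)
The plan is to exploit the polynomial exactness of the two rules together with a Peano-kernel representation of the error functional, reducing everything to a single pointwise bound on a kernel. Recall that the Gauss--Legendre formula (\ref{Legendre}) is exact on $\PP_{2m-1}$ and that the Gauss--Radau formula (\ref{Radau}), having $m+1$ nodes, is exact on $\PP_{2m}$, and that in both cases the Christoffel numbers are positive and add up to $\int_0^1 1\,dx=1$. I would carry out the argument in detail for $e_m^L$ and then indicate the (essentially identical) modifications for $e_m^R$.

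First I would reduce the problem to the $r$-th derivative. Let $p^\ast\in\PP_{2m-1-r}$ be a polynomial of best weighted $L^1$ approximation to $f^{(r)}$, i.e.\ $\norm{(f^{(r)}-p^\ast)\varphi^r}_1=E_{2m-1-r}(f^{(r)})_{\varphi^r,1}$, and let $Q$ be any $r$-fold antiderivative of $p^\ast$. Then $Q\in\PP_{2m-1}$, so by exactness $e_m^L(Q)=0$ and hence $e_m^L(f)=e_m^L(g)$ with $g:=f-Q$. By construction $g^{(r)}=f^{(r)}-p^\ast$, whence $\norm{g^{(r)}\varphi^r}_1=E_{2m-1-r}(f^{(r)})_{\varphi^r,1}$. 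Thus it suffices to establish the estimate
\[
|e_m^L(g)|\le \frac{\C}{m^r}\,\norm{g^{(r)}\varphi^r}_1 ,\qquad \C\neq\C(m,g).
\]
Both sides are unaffected by adding to $g$ an arbitrary element of $\PP_{r-1}$ (the left side because $e_m^L$ annihilates $\PP_{r-1}\subset\PP_{2m-1}$ once $m\ge r/2$, the right side because such a polynomial has vanishing $r$-th derivative), so no care is needed in fixing the constants of integration defining $Q$.

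The core step is to represent $e_m^L(g)$ through its Peano kernel. Since $e_m^L$ vanishes on $\PP_{r-1}$, one has
\[
e_m^L(g)=\int_0^1 g^{(r)}(t)\,K_r(t)\,dt ,\qquad
K_r(t)=\frac{1}{(r-1)!}\,e_m^L\!\big((\cdot-t)_+^{\,r-1}\big),
\]
and therefore
\[
|e_m^L(g)|\le \Big\|\frac{K_r}{\varphi^r}\Big\|_\infty\,\norm{g^{(r)}\varphi^r}_1 .
\]
Hence the whole statement is reduced to the pointwise kernel bound $|K_r(t)|\le \C\,m^{-r}\varphi^r(t)$ on $(0,1)$. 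This is the hard part, and it is exactly where the fine structure of the Gauss nodes enters: I would use the classical spacing of the Legendre zeros (which cluster like $m^{-2}$ near the endpoints and like $m^{-1}$ in the bulk) together with the bound $\lambda_{m,k}^L\sim m^{-1}\varphi(x_{m,k}^L)$ for the Christoffel numbers to control $K_r$ both in the interior and, more delicately, near $0$ and $1$, where the factor $\varphi^r(t)$ degenerates and the required decay of $K_r$ must be matched exactly. Combining the two previous displays then yields (\ref{corL}).

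Finally, the Radau estimate (\ref{corR}) follows along the same lines. The only changes are that the exactness degree is now $2m$, so one picks $p^\ast\in\PP_{2m-r}$ and obtains $E_{2m-r}(f^{(r)})_{\varphi^r,1}$ on the right-hand side, and that the node set contains the fixed endpoint $x_{m,0}^R=0$ with weight $\lambda_{m,0}^R=(m+1)^{-2}$. This endpoint node affects the kernel only near $t=0$, where its contribution is again of the right order $m^{-r}\varphi^r$, so the same Peano-kernel bound $|K_r(t)|\le \C\,m^{-r}\varphi^r(t)$ carries over and gives (\ref{corR}).
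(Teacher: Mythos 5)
The paper does not prove this theorem at all: it is quoted verbatim from \cite{MMlibro}, so there is no internal proof to compare against. Judged on its own merits, your proposal has the right skeleton but a genuine gap at its core. The reduction via polynomial exactness to $g=f-Q$ with $\norm{g^{(r)}\varphi^r}_1=E_{2m-1-r}(f^{(r)})_{\varphi^r,1}$ is correct, and the Peano-kernel representation is a legitimate device; but after these soft steps the entire content of the theorem has been repackaged into the single estimate $|K_r(t)|\le \C\,m^{-r}\varphi^r(t)$, which you explicitly do not prove ("I would use the classical spacing of the Legendre zeros \dots"). That bound is not a routine consequence of $\lambda_{m,k}^L\sim m^{-1}\varphi(x_{m,k}^L)$ and node spacing: for $r=1$ it already requires a Chebyshev--Markov--Stieltjes-type argument to control $K_1(t)=(1-t)-\sum_{x_k>t}\lambda_k$ by the single Christoffel number nearest $t$, and for general $r$ the endpoint regime $t\sim m^{-2}$, where the target $m^{-r}\varphi^r(t)\sim m^{-2r}$ is extremely small, demands delicate cancellation in $e_m^L\bigl((\cdot-t)_+^{r-1}\bigr)$ that a crude triangle inequality over the nodes will not deliver. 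Without this estimate the proof is an outline, not a proof.

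Two further points are worth flagging. First, the standard proof of exactly this statement (the one in \cite{MMlibro}) does not go through Peano kernels at all: it combines the positivity and exactness of the rule with a Marcinkiewicz-type discretization inequality $\sum_k\lambda_k|(f-P)(x_k)|\le\C\bigl(\norm{f-P}_1+m^{-1}\norm{(f-P)'\varphi}_1\bigr)$ and an iterated Favard inequality $E_m(f)_{1}\le\C m^{-1}E_{m-1}(f')_{\varphi,1}$ to descend to the $r$-th derivative; if you want a complete argument, that route is considerably more tractable than establishing the sharp weighted kernel bound. Second, the hypothesis $f\in W^1_r$ only gives $f^{(r)}\varphi^r\in L^1$, so for $r\ge 2$ the derivative $f^{(r)}$ need not be integrable up to the endpoints; the validity of the Taylor/Peano representation $e_m^L(g)=\int_0^1 g^{(r)}(t)K_r(t)\,dt$ for such $g$ is itself something you must justify (e.g., by a density argument), and it is not independent of the kernel bound you are postponing.
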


\section{The method} \label{the method}
In this section we describe the numerical method we propose in order to approxi\-mate the solution of problem (\ref{Neumann}). The procedure consists in three steps. The first one is to rewrite equation (\ref{eq1}) as an equivalent system of integral equations by using a suitable decomposition of the boundary. The second step is to solve this system by applying a Nystr\"om type method based on the Gauss-Radau formula (\ref{Radau}), with a number of quadrature knots depending on the length of the involved arc of the boundary. Finally, the solution (\ref{u}) of the exterior Neumann problem is approximated using the results obtained in the previous step.

\subsection{An equivalent system of integral equations}\label{formulation}
By proceeding in  counterclockwise direction, we denote by $\Gamma_k$ and $\Upsilon_k$ two sufficiently small smooth arcs of the boundary $\Sigma$ intersecting at the corner $P_k$. Moreover, we assume that their lengths are chosen so that $\Gamma_k$ and $\Upsilon_k$ essentially coincide with the segments $\tau_k$ and $t_k$, respectively, tangent to the curve $\Sigma$ at $P_k$ in the sense that
\begin{equation}\label{delta1}
\max_{(x,y) \in \Gamma_k} |y-y_{{\tau}_k}| \leq \delta
\end{equation}
and
\begin{equation}\label{delta2}
\max_{(x,y) \in \Upsilon_k} |y-y_{{t}_k}|  \leq \delta
\end{equation}
where $y_{{\tau}_k}$ and $y_{{t}_k}$ are the ordinates of the points with abscissa $x$ on $\tau_k$ and $t_k$, respectively, and $\delta$ is a very small positive number. Then, denoting by $C_k$ the section connecting  $\Upsilon_{k}$ and $\Gamma_{k+1}$, with $\Gamma_{n+1}\equiv \Gamma_1$, and collectively by $\Sigma_1,\dots,\Sigma_{3n}$ all these sections, starting from $\Gamma_1$, we have

\begin{equation}\label{sigma}
\Sigma=\bigcup_{j=1}^{3n} \Sigma_j, \quad \mathrm {with} \quad   \Sigma_j=\left\{
           \begin{array}{ll}
             \Gamma_k, & \hbox{$j \equiv 1 \ (\mathrm {mod} \ 3)$, \quad  $k=\frac{j-1}{3}+1$} \\
             \Upsilon_k, & \hbox{$j \equiv 2 \ (\mathrm{mod} \ 3)$, \quad $k=\frac{j-2}{3}+1$} \\
             C_k, & \hbox{$j \equiv 0 \ (\mathrm{mod} \ 3)$, \quad $k=\frac{j-3}{3}+1$}
           \end{array}.
         \right.
\end{equation}
In this way, equation (\ref{eq1}) is equivalent to
the following system of $3n$ boundary integral equations
\begin{eqnarray}\label{sist1}
(-2 \pi + \Omega(P)) u_i(P)+ \sum_{j=1}^{3n} \int_{\Sigma_j} u_j(Q) \frac{\partial}{\partial \mathbf{n}_Q} [\log{|P-Q|}] d \Sigma_{Q}=g_i(P),  \\
\hspace{5cm} \quad P \in \Sigma_i, \quad i=1,\ldots,3n, \nonumber
\end{eqnarray}
where $u_i$ and $g_i$ denote the restrictions of the functions $u$ and $g$ to the curve $\Sigma_i$, respectively.

Now, in order to transform the above curvilinear $2D$ integrals into 1D integrals, for each arc $\Sigma_i$
we introduce  a parametric representation $\sigma_i$ defined on the interval $[0,1]$
\begin{equation}\label{trasf}
\sigma_i: s \in [0,1] \to (\xi_i(s), \eta_i(s)) \in \Sigma_i,
\end{equation}
with $\sigma_i \in C^{2}([0,1])$ and $|\sigma_i'(s)|\neq 0$ for each $0 \leq s \leq 1$ and $i=1,\ldots,3n$. Without any loss of generality, we can assume that $\sigma'_i(s)<0$ if $i \equiv 1$(mod 3), $\sigma'_i(s)>0$ if $i \equiv 2,3$(mod 3) and $\sigma_i(0)=P_k$ if $\Sigma_i=\Gamma_k$ or $\Sigma_i=\Upsilon_k$.
Hence, system (\ref{sist1}) becomes, for $s \in [0,1]$,
\begin{equation}\label{sist01}
(-2 \pi + \bar{\Omega}_i(s)) \bar{u}_i(s)+ \sum_{j=1}^{3n} \int_0^1 K^{i,j}(t,s) \bar{u}_i(t) dt= \bar{g}_i(s), \quad i=1,\ldots,3n,
\end{equation}
where $\bar{\Omega}_i(s)= \Omega(\sigma_i(s))$,
$\bar{u}_i(s)=u_i(\sigma_i(s))$, $\bar{g}_i(s)=g_i(\sigma_i(s))$ and
\begin{equation} \nonumber
K^{i,j}(t,s)=\left\{
    \begin{array}{ll}
      \displaystyle \frac{\eta'_j(t)[\xi_i(s)-\xi_j(t)]-\xi'_j(t)[\eta_i(s)-\eta_j(t)]}{[\xi_i(s)-\xi_j(t)]^2+[\eta_i(s)-\eta_j(t)]^2}, & \hbox{$i \neq j \quad \mathrm{or} \quad t \neq s$} \\ \\
    \displaystyle \frac 1 2 \frac{\eta'_j(t) \xi''_j(t)-\xi_j'(t) \eta''_j(t)}{[\xi'_j(t)]^2+[\eta'_j(t)]^2}, & \hbox{$i=j \quad \mathrm{and} \quad t=s$}
    \end{array}
  \right..
\end{equation}
We point out that, in virtue of our assumptions,
\begin{equation} \label{angle}
\bar{\Omega}_i(s)=\bar{\Omega}_{i+1}(s)=\left\{
\begin{array}{ll}
\pi,  & \quad 0<s \leq 1\vspace{0.2cm}\\
(1-\chi_k)\pi,  & \quad s=0
\end{array}
\right.
\end{equation}
when $i \equiv 1$(mod 3) and $k=\frac{i-1}{3}+1$, and  $\bar{\Omega}_i(s)=\pi$,  $\forall s \in [0,1]$, if $i \equiv 3$(mod 3).

In order to carry out the numerical treatment of system (\ref{sist01}), we introduce the operators
\begin{equation}\label{opK}
(\mathcal K^{i,j} \rho)(s)=\int_{0}^1 K^{i,j}(t,s) \rho(t) dt, \quad \rho \in C([0,1]),
\end{equation}
which are compact on the space $C([0,1])$, since their kernels are continuous on $[0,1] \times [0,1]$
(see, for instance, \cite{AdH,Kress}), except when $i,j \equiv 1,2 (\mathrm{mod} \ 3)$  and $|i-j|=1$.
In fact, in such cases $\mathcal K^{i,j}$ takes the following form (see \cite{AdH,ChandlerGraham,Jeon})
\begin{equation}\label{Mellin}
(\mathcal K^{i,j} \rho)(s)=(\mathcal L^{i,j} \rho)(s)+(\mathcal M^{i,j}\rho)(s),
\end{equation}
where the integral operators $\mathcal L^{i,j}  $
\begin{equation}\label{opL}
(\mathcal L^{i,j} \rho)(s)= \int_0^1 L^{i,j} (t,s) \rho(t) dt,
\end{equation}
have a Mellin-type kernel given by
$${L}^{i,j} (t,s)=-\frac{s \sin {(\chi_k \pi)}}{s^2+2ts \cos {(\chi_k \pi)}+t^2},$$
with $k=\frac{i-1}{3}+1$ if $i \equiv 1(\mathrm{mod} \ 3)$, and $k=\frac{i-2}{3}+1$ if $i \equiv 2(\mathrm{mod} \ 3)$,
while the integral operators $\mathcal M^{i,j}$
\begin{equation}\label{opM}
(\mathcal M^{i,j}\rho)(s)=\int_0^1 M^{i,j}(t,s)\rho(t)dt, \quad \rho \in C([0,1]),
\end{equation}
have a continuous kernel $ M^{i,j}(t,s)$ on $[0,1]\times[0,1]$.

Thus,  by collecting all the integral operators with continuous kernels
in the following matrix

\begin{eqnarray} \label{matrixK}
\hspace*{-0.22cm} {\mathcal K}=  \left(
                           \begin{array}{lllllll}
                             \mathcal K^{1,1} & \mathcal M^{1,2} & \mathcal K^{1,3} & \cdots & \mathcal K^{1,3n-2} & \mathcal K^{1,3n-1} & \mathcal K^{1,3n} \\
                              \mathcal M^{2,1}&  \mathcal K^{2,2} & \mathcal K^{2,3} & \cdots & \mathcal K^{2,3n-2} & \mathcal K^{2,3n-1} & \mathcal K^{2,3n} \\
                             \mathcal K^{3,1} & \mathcal K^{3,2} & \mathcal K^{3,3} & \ddots & \mathcal K^{3,3n-2} & \mathcal K^{3,3n-1} & \mathcal K^{3,3n} \\
                             \vdots & \vdots & \vdots & \ddots & \vdots & \vdots & \vdots \\
                             \mathcal K^{3n-2,1} & \mathcal K^{3n-2,2} & \mathcal K^{3n-2,3} & \cdots & \mathcal K^{3n-2,3n-2} & \mathcal M^{3n-2,3n-1} & \mathcal K^{3n-2,3n} \\
                             \mathcal K^{3n-1,1} & \mathcal K^{3n-1,2} & \mathcal K^{3n-1,3} & \cdots & \mathcal M^{3n-1,3n-2} & \mathcal K^{3n-1,3n-1} & \mathcal K^{3n-1,3n} \\
                             \mathcal K^{3n,1} & \mathcal K^{3n,2} & \mathcal K^{3n,3} & \cdots & \mathcal K^{3n,3n-2} & \mathcal K^{3n,3n-1} & \mathcal K^{3n,3n} \\
                           \end{array}
                         \right)
\end{eqnarray}
and the Mellin-type integral operators in the following block matrix
\begin{equation} \label{MatrixL}
{\mathcal L}= \small\left( \begin{array}{lllllll}
                             {\mathcal L}^1 &  &  &      \\
                              &   {\mathcal L}^4 &   &  \\
                              &  &   \ddots &      \\
                               &  &  &  {\mathcal L}^{3n-2} \\
                           \end{array}\right),
\end{equation}
with the blocks ${\mathcal L^i}$, for $i=1,\dots,3n$, and $i \equiv 1(\mathrm{mod} \ 3),$ given by
\begin{equation} \label{MatrixLi}
{\mathcal L^i}=\small\left(\begin{array}{lll}
                              (-\pi+\bar{\Omega}_i)I &  \mathcal L^{i,i+1} &   0\\
                               \mathcal  L^{i+1,i}   & (-\pi+\bar{\Omega}_{i+1})I & 0\\
                              0 & 0 & 0\\
                           \end{array}\right),
\end{equation}

we can rewrite
system (\ref{sist01}), in a compact form, as
\begin{equation} \label{sistop}
(-\pi {\mathcal{I}}+ {\mathcal{L}}+{\mathcal{K}}) \bar{u}=\bar{g},
\end{equation}
where \begin{equation}
{\mathcal I} =\small\left(
                           \begin{array}{llll}
                             I & 0 & \cdots &  0 \\
                              0& I & \cdots & 0   \\
                             \vdots &   \cdots & \ddots  &\vdots \\
                             0 &   \cdots & \cdots  & I \\
                           \end{array}
                         \right),
\end{equation}
with $I$ the identity operator on $C([0,1])$, and
\begin{equation} \label{barpsi}
\bar{u}= \left(\bar{u}_1,\bar{u}_2,\ldots,\bar{u}_{3n}\right)^T, \quad \bar{g}= \left(\bar{g}_1,\bar{g}_2,\ldots,\bar{g}_{3n}\right)^T.
\end{equation}
Now, let us introduce the following complete subspace of $C([0,1])^{^{3n}}$
\begin{eqnarray} \label{spacetilde} \small
{\tilde{\mathcal X}}&=&\left\{(f_1,\ldots,f_{3n})^T \in C([0,1])^{^{3n}}  \mid f_i(0)=f_{i+1}(0), \ f_{i+1}(1)=f_{i+2}(0), \nonumber \right.\\  &{} & \hspace{2 cm} \left.  \ f_i(1)=f_{i-1}(1), \forall i\in \{1,\ldots,3n\}, i \equiv 1 \ (\mathrm{mod} \ 3) \right\},
\end{eqnarray}
with $f_0 \equiv f_{3n}$, and the bijective map $\eta:C(\Sigma) \to \tilde{\mathcal X}$ defined as
$$ \eta f=(\bar{f}_1,\ldots,\bar{f}_{3n}), \quad \bar{f}_i(t)=f(\sigma_i(t)), \quad t \in [0,1].$$
We note that the arrays $\bar{u}$ and $\bar{g}$ introduced in (\ref{barpsi}) belong to ${\tilde{\mathcal{X}}}$ and that  the operator
$$(-\pi {\mathcal{I}}+ {\mathcal{L}}+{\mathcal{K}})^{-1}:\tilde{\mathcal X} \to\tilde{\mathcal X} $$ exists and is bounded. This is a consequence of the equality
\begin{equation}\label{propr_eta}
(-\pi {\mathcal{I}}+ {\mathcal{L}}+{\mathcal{K}})=\eta(-\pi+K)\eta^{-1}
\end{equation}
and of the invertibility of the operator $(-\pi+K):C(\Sigma) \to C(\Sigma)$.
However, in order to carry out the analysis of the stability and the convergence  of the numerical procedure we are going to propose, for approximating the solution of  (\ref{sistop}),  let us also introduce the following complete subspace of $C([0,1])^{^{3n}}$
\begin{equation} \label{space}\small
{\mathcal X}=\left\{(f_1,\ldots,f_{3n})^T \in C([0,1])^{^{3n}}  \mid f_i(0)=f_{i+1}(0), \forall i\in \{1,\ldots,3n\}, \ i \equiv 1 \ (\mathrm{mod} \ 3)\right\}
\end{equation}
equipped with the uniform norm defined in (\ref{normdirectproduct}).
Let us observe that  $\tilde{\mathcal X} \subset {\mathcal X}$.

The next result holds true.

\begin{theorem}\label{theoremsolv}
Let $\mathrm{Ker}(-\pi \mathcal{I}+{\mathcal L}+{\mathcal K})=\{0\}$ in the Banach space ${\mathcal X}$. Then system $(\ref{sistop})$ has a unique solution $\bar{u}$ in ${\mathcal X}$ for each given right-hand side $\bar{g} \in {\mathcal X}$. Moreover, if $\bar{g} \in \tilde{\mathcal X}$ then $\bar{u} \in \tilde{\mathcal X}$.
\end{theorem}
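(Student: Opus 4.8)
The plan is to prove that the operator $\mathcal{A}:=-\pi\mathcal{I}+\mathcal{L}+\mathcal{K}$ is invertible on the Banach space $\mathcal{X}$ by showing that it is a Fredholm operator of index zero; once this is in hand, the standing hypothesis $\mathrm{Ker}(\mathcal{A})=\{0\}$ promotes injectivity to bijectivity, and the bounded inverse follows from the open mapping theorem, giving existence and uniqueness in $\mathcal{X}$ for every $\bar g\in\mathcal{X}$. The last assertion, that $\bar g\in\tilde{\mathcal{X}}$ forces $\bar u\in\tilde{\mathcal{X}}$, will then be a one-line uniqueness argument using the inclusion $\tilde{\mathcal{X}}\subset\mathcal{X}$.

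First I would check that $\mathcal{A}$ maps $\mathcal{X}$ boundedly into itself, i.e. that it preserves the corner-matching constraints $f_i(0)=f_{i+1}(0)$ defining $\mathcal{X}$. At $s=0$ the Mellin kernel $L^{i,j}(t,0)$ vanishes identically, so on the one hand the operators $\mathcal{L}^{i,i+1}$ contribute nothing at the corner, and on the other hand each entry $\mathcal{M}^{i,j}$ agrees there with the full kernel operator $\mathcal{K}^{i,j}$. Since the diagonal multipliers $-2\pi+\bar\Omega_i$ coincide on $\Gamma_k$ and $\Upsilon_k$ by (\ref{angle}), and the kernels $K^{i,j}(t,0)$ depend on the row index only through the common corner point $\sigma_i(0)=\sigma_{i+1}(0)=P_k$, the images of a function in $\mathcal{X}$ agree at $s=0$ on the two arcs meeting at every corner. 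I expect this elementary bookkeeping to be the most delicate of the routine steps, since it must be done by explicit inspection of the kernels rather than by invoking $(-\pi+K)\colon C(\Sigma)\to C(\Sigma)$, which only governs the smaller space $\tilde{\mathcal{X}}$.

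To obtain the Fredholm property most economically I would avoid any symbol computation and exploit what the excerpt already grants: $\mathcal{A}$ restricts to an invertible operator of $\tilde{\mathcal{X}}$ onto itself (through $\mathcal{A}=\eta(-\pi+K)\eta^{-1}$), and $\tilde{\mathcal{X}}$ is a closed, $\mathcal{A}$-invariant subspace of $\mathcal{X}$ of finite codimension, its defining relations adding only $2n$ scalar constraints to those of $\mathcal{X}$. By the additivity of the Fredholm index along the short exact sequence $0\to\tilde{\mathcal{X}}\to\mathcal{X}\to\mathcal{X}/\tilde{\mathcal{X}}\to0$, the index of $\mathcal{A}$ on $\mathcal{X}$ equals the index of $\mathcal{A}|_{\tilde{\mathcal{X}}}$ (which is $0$, being invertible) plus the index of the induced endomorphism of the finite-dimensional quotient $\mathcal{X}/\tilde{\mathcal{X}}$ (also $0$, since any endomorphism of a finite-dimensional space has equal kernel and cokernel dimensions). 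Hence $\mathcal{A}$ is Fredholm of index zero on $\mathcal{X}$. A more classical alternative would instead write $\mathcal{A}=(-\pi\mathcal{I}+\mathcal{L})+\mathcal{K}$ with $\mathcal{K}$ compact (continuous kernels) and show the block-diagonal part $-\pi\mathcal{I}+\mathcal{L}$ to be Fredholm of index zero through the symbol analysis of its $2\times2$ Mellin corner blocks under the admissibility condition $-1<\chi_k<1$, $\chi_k\neq0$; adding the compact $\mathcal{K}$ leaves the index unchanged.

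With $\mathcal{A}$ Fredholm of index zero and $\mathrm{Ker}(\mathcal{A})=\{0\}$, injectivity gives $\dim\mathrm{coker}(\mathcal{A})=\dim\mathrm{Ker}(\mathcal{A})=0$, so $\mathcal{A}$ is onto and hence bijective, with bounded inverse; this is the first assertion. For the second, let $\bar g\in\tilde{\mathcal{X}}$. By the invertibility of $\mathcal{A}$ on $\tilde{\mathcal{X}}$ there is $\bar u^{\ast}\in\tilde{\mathcal{X}}$ with $\mathcal{A}\bar u^{\ast}=\bar g$; since $\tilde{\mathcal{X}}\subset\mathcal{X}$, this $\bar u^{\ast}$ is also a solution in $\mathcal{X}$, and the uniqueness just established in $\mathcal{X}$ forces $\bar u=\bar u^{\ast}\in\tilde{\mathcal{X}}$. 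The principal difficulty throughout is the Fredholm step; the self-contained codimension argument reduces it to the two facts already at our disposal, namely the invertibility of $\mathcal{A}$ on $\tilde{\mathcal{X}}$ and the verification that $\mathcal{A}$ maps $\mathcal{X}$ into $\mathcal{X}$.
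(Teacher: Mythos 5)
Your argument is correct, but it reaches the conclusion by a genuinely different route than the paper. The paper works entirely inside $\mathcal{X}$: it bounds the Mellin part directly, $\|\mathcal{L}\|\leq\max_k|\chi_k|\pi<\pi$, inverts $-\pi\mathcal{I}+\mathcal{L}$ by the geometric series theorem, rewrites (\ref{sistop}) as the second--kind equation $\bar u+(-\pi\mathcal{I}+\mathcal{L})^{-1}\mathcal{K}\bar u=(-\pi\mathcal{I}+\mathcal{L})^{-1}\bar g$ with $(-\pi\mathcal{I}+\mathcal{L})^{-1}\mathcal{K}$ compact (matrix of compact operators), and invokes the Fredholm alternative. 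You instead bypass both the norm estimate and the compactness of $\mathcal{K}$, taking as input the invertibility of the restriction of $\mathcal{A}=-\pi\mathcal{I}+\mathcal{L}+\mathcal{K}$ to $\tilde{\mathcal{X}}$ (granted by (\ref{propr_eta})) together with the fact that $\tilde{\mathcal{X}}$ is a closed $\mathcal{A}$--invariant subspace of finite codimension (at most $2n$) in $\mathcal{X}$; the snake lemma applied to $0\to\tilde{\mathcal{X}}\to\mathcal{X}\to\mathcal{X}/\tilde{\mathcal{X}}\to0$ then gives $\ker\mathcal{A}\cong\ker\bar{\mathcal{A}}$ and $\operatorname{coker}\mathcal{A}\cong\operatorname{coker}\bar{\mathcal{A}}$ for the induced endomorphism $\bar{\mathcal{A}}$ of the finite--dimensional quotient, whence index zero and the desired bijectivity under the kernel hypothesis. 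This is sound and arguably shorter, but note what the paper's route buys: the quantitative bound $\|(-\pi\mathcal{I}+\mathcal{L})^{-1}\|\leq 1/(\pi-\|\mathcal{L}\|)$ is exactly the estimate whose discrete analogue ($\lim_m\|\tilde{\mathcal{L}}_m\|<\pi$ in Theorem \ref{operL}) drives the stability proof of Theorem \ref{maintheorem}, so the norm computation is not avoidable in the paper's larger scheme, only in this theorem. One caution on your preliminary step: checking $\mathcal{A}(\mathcal{X})\subseteq\mathcal{X}$ is not merely a matter of matching values at $s=0$. The operator $\mathcal{L}^{i,i+1}$ does vanish \emph{at} $s=0$, but $(\mathcal{L}^{i,i+1}\rho_{i+1})(s)\to-\chi_k\pi\,\rho_{i+1}(0)$ as $s\to0^+$, and this jump is cancelled by the opposite jump of $(-\pi+\bar\Omega_i(s))\rho_i(s)$ coming from (\ref{angle}) precisely when $\rho_i(0)=\rho_{i+1}(0)$; continuity of the image at the corner, not just equality of corner values, is where the defining constraint of $\mathcal{X}$ is used. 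Your treatment of the final assertion ($\bar g\in\tilde{\mathcal{X}}\Rightarrow\bar u\in\tilde{\mathcal{X}}$ by uniqueness in the larger space) coincides with the paper's.
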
 \vspace{0.2cm}

We remark that the solution $\bar{u}$ has a low smoothness near the corner points. In fact, if we look in detail the smoothness properties of the solution on each section $\Sigma_i$ of the boundary, it results that (see \cite{A,Chandler,Gris} and the re\-fe\-ren\-ces therein)
 \begin{itemize}
\item for $i \equiv 0 (\mathrm{mod} \ 3)$, $\bar{u}_i$ is smooth;
\item for $i \equiv 1 (\mathrm{mod} \ 3)$,  with $k=\displaystyle\frac{i-1}{3}+1$, or $i \equiv 2(\mathrm{mod} \ 3)$,  with $k=\displaystyle\frac{i-2}{3}+1$, we have
\begin{equation} \label{behavioursolution}
\bar{u}_i(t)=O\left(t^{\beta_k}\right), \quad  0 <t \leq 1, \quad \beta_k=\frac{1}{1+|\chi_k|},
\end{equation}
\begin{equation} \label{behavioursolution2}
\bar{u}_i^{(r)}(t) \leq \C t^{\beta_k-r}, \quad  0 <t \leq 1, \quad r=1,2,\ldots .
\end{equation}
\end{itemize}
Note that, being $\frac{1}{2}<\beta_k<1$, the first derivative of the solution has an algebraic singularity in the corner points $P_k$.

\subsection{A  Nystr\"om method}\label{Nystrom}
In order to approximate the solution $\bar{u}$ of (\ref{sistop}), we introduce
the  finite rank operators defined as follows.
When $i\equiv 1,2$(mod 3) and $|i-j| \neq 0,1$ or when $i \equiv 0$(mod 3), let
\begin{equation} \label{Kmij1}
(\mathcal K_m^{i,j} \rho)(s)=
\displaystyle \sum_{h=0}^{\nu_m} \lambda_{\nu_m,h}^{R}  K^{i,j}(x_{\nu_m, h}^R,s) \rho(x_{\nu_m,h}^{R})
\end{equation}
be the discrete operator, approximating $\mathcal K^{i,j}$ in (\ref{opK}), obtained by applying the Radau formula (\ref{Radau}) with
$\nu_m=\nu(m)$ quadrature points. In the remainder cases, let us define the operators
\begin{equation} \label{Kmij2}
(\mathcal K_m^{i,j} \rho)(s)=\sum_{h=0}^{\mu_m} \lambda_{\mu_m,h}^{R}  K^{i,j}(x_{\mu_m, h}^R,s) \rho(x_{\mu_m,h}^{R}),
\end{equation}
\begin{equation} \label{Mmij}
(\mathcal M_m^{i,j} \rho)(s) =\displaystyle  \sum_{h=0}^{\mu_m} \lambda_{\mu_m,h}^{R}  M^{i,j}(x_{\mu_m,h}^R,s) \rho(x_{\mu_m,h}^R),
\end{equation}
and
\begin{equation} \label{Lmij}
(\mathcal L_m^{i,j} \rho)(s)=\displaystyle  \sum_{h=0}^{\mu_m} \lambda_{\mu_m,h}^{R} L^{i,j}(x_{\mu_m,h}^R,s) \rho(x_{\mu_m,h}^R),
\end{equation}
approximating $\mathcal K^{i,j}$, $\mathcal M^{i,j}$ and $\mathcal L^{i,j}$ defined in (\ref{opK}), (\ref{opM}) and (\ref{opL}), respectively, by means of the same quadrature rule with $\mu_m=\mu(m)$ nodes.
We shall choose $\nu_m$ and $\mu_m$ linear functions of $m$ and we shall assume $\mu_m < \nu_m$, according to the different length of the smooth sections $\Sigma_j$ involved in the adopted decomposition of the boundary (for instance, $\nu_m=m$ and $\mu_m$ a fraction of $m$).

At this point, if we apply the Nystr\"om method based on these quadrature formulas,  we should  solve the following approximating system
\begin{equation}
(-\pi \mathcal{I}+{\mathcal L}_m+{\mathcal K}_m)\bar{u}_m=\bar{g},
\end{equation}
where
 ${\mathcal L}_m$ is the matrix  obtained by replacing in (\ref{MatrixL})  the blocks $\mathcal{L}^i$ with
 \begin{equation} \label{Lmi}
{\mathcal L}_m^i=\left(\begin{array}{ccc}
(-\pi+\bar{\Omega}_i)I & \mathcal L_m^{i,i+1} & 0\\
\mathcal L_m^{i+1,i} & (-\pi + \bar{\Omega}_{i+1}) I & 0 \\
0 & 0 & 0
\end{array}\right),
\end{equation}
 ${\mathcal K}_m$ is the matrix given by (\ref{matrixK}) but with the operators $\mathcal K_m^{i,j}$ and $\mathcal M_m^{i,j}$ in place of  $\mathcal{K}^{i,j}$ and  $\mathcal{M}^{i,j}$, respectively,
 and $\bar{u}_m=(\bar{u}_{m,1},\ldots.,\bar{u}_{m,3n})^T$ is the array of the unknowns.
However, proceeding in this way we will not be able to prove the stability and convergence of the method.
Indeed, it is possible to establish (see Theorem \ref{opercompatti}) that any sequence of operators $\{\mathcal K_m^{i,j}\}_m$ and $\{\mathcal M_m^{i,j}\}_m$ are pointwise convergent to the operators $\mathcal K^{i,j}$ and $\mathcal M^{i,j}$, respectively  in the space $C([0,1])$,  whereas we can not state a similar result
for the sequences $\{\mathcal L_m^{i,j}\}_m$.
More precisely, we are able to prove that, for  any $\rho \in C([0,1])$,  each sequence of functions $\left\{ \mathcal L_m^{i,j}\rho\right\}_m$ converges uniformly to $\mathcal L^{i,j}\rho$ in any interval of the type $[\frac{c}{m^{2-2\epsilon}},1]$, for some constant $c>0$ and arbitrarily small $\epsilon>0$, but not in the whole interval $[0,1]$.

To overcome this problem, we propose a perturbated Nystr\"om method based
on a modification of the matrix $\mathcal{L}_m$. Indeed,
following an idea in \cite{FL},  we modify the matrix ${\mathcal L}_m$ by replacing the blocks ${\mathcal L}_m^i$ with the new blocks $\tilde{\mathcal L}_m^i$ defined as \\

$(\tilde{\mathcal L}_m^i \varrho)(s)$
\begin{eqnarray} \label{opmodified}
&=\left\{
  \begin{array}{ll}
    ({\mathcal L}_m^i \varrho)(s), & \hbox{$\displaystyle  \frac{c}{m^{2-2\epsilon}} \leq s \leq 1$}, \\ \\
    \displaystyle \frac{m^{2-2\epsilon}}{c}\left[s({\mathcal L}_m^i \varrho)\left(\frac{c}{m^{2-2\epsilon}}\right)
    +\left(\frac{c}{m^{2-2\epsilon}} -s\right)({\mathcal L}^i \varrho)(0)\right],
   & \hbox{$0 \leq s < \displaystyle  \frac{c}{m^{2-2\epsilon}}$,}
  \end{array} \right.
\end{eqnarray}
for $i=1,\ldots,3n$, $i\equiv 1(\mathrm{mod} \ 3)$,
 $\varrho \in C([0,1])^{^3}$, where $c>0$ is a fixed constant and $\epsilon>0$ is an arbitrarily small number.

Then, denoting by $\tilde {\mathcal L}_m$ the matrix thus obtained, in place of  (\ref{sistop}), we consider the new approximating system
\begin{equation} \label{sistapprox}
(-\pi \mathcal{I}+\tilde {\mathcal L}_m+{\mathcal K}_m)\bar{u}_m=\bar{g}.
\end{equation}

The operators $\tilde{\mathcal L}_m$ and ${\mathcal K}_m$ satisfy the following theorems.
\begin{theorem} \label{operL}
The operators $\tilde{\mathcal  L}_m:{\mathcal X} \to {\mathcal X}$ are linear maps such that
\begin{equation} \label{limnormWm}
 \lim_{m \to \infty} \|\tilde{\mathcal  L}_m \| < \pi
 \end{equation}
 and
 \begin{equation} \label{convpointnormWm}
 \lim_{m \to \infty} \|(\tilde{\mathcal  L}_m -{\mathcal  L})\rho\|_{\infty}=0, \quad \forall \ \rho \in {\mathcal X}.
 \end{equation}
\end{theorem}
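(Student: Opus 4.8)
The plan is to exploit the block-diagonal structure of $\tilde{\mathcal L}_m$: only the blocks $\tilde{\mathcal L}_m^i$ with $i\equiv 1\,(\mathrm{mod}\ 3)$ are nonzero, each acts on the triple $(\varrho_i,\varrho_{i+1},\varrho_{i+2})$ attached to the corner $P_k$, and $\|\tilde{\mathcal L}_m\|=\max_k\|\tilde{\mathcal L}_m^i\|$; so both assertions reduce to a single block. Writing $a_m=c\,m^{-(2-2\epsilon)}$ for the breakpoint, I would first record two elementary facts. First, $\tilde{\mathcal L}_m$ indeed maps $\mathcal X$ into $\mathcal X$: on $[a_m,1]$ the block equals the genuine Nyström operator $\mathcal L_m^i$, a finite sum of functions of $s$ that are continuous because $s\ge a_m>0$ keeps every kernel denominator away from zero; on $[0,a_m)$ it is continuous by construction and matches $\mathcal L_m^i\varrho$ at $s=a_m$, while at $s=0$ it takes the \emph{exact} value $(\mathcal L^i\varrho)(0)$. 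Since $L^{i,j}(t,0)=0$, this value is $(-\chi_k\pi\varrho_i(0),\,-\chi_k\pi\varrho_{i+1}(0),\,0)$, whose first two entries agree precisely when $\varrho_i(0)=\varrho_{i+1}(0)$, i.e. on $\mathcal X$; thus the matching conditions are preserved, which is exactly why (\ref{opmodified}) uses the exact operator, not its discretization, at the corner. Second, for $0\le s<a_m$ the value $(\tilde{\mathcal L}_m^i\varrho)(s)=\frac{s}{a_m}(\mathcal L_m^i\varrho)(a_m)+\frac{a_m-s}{a_m}(\mathcal L^i\varrho)(0)$ is a convex combination, so $|(\tilde{\mathcal L}_m^i\varrho)(s)|\le\max\{|(\mathcal L_m^i\varrho)(a_m)|,\,|(\mathcal L^i\varrho)(0)|\}$: the interpolation never enlarges the sup-norm.

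For the pointwise convergence (\ref{convpointnormWm}) I would fix $\rho\in\mathcal X$ and split $[0,1]=[a_m,1]\cup[0,a_m)$. On $[a_m,1]$ one has $\tilde{\mathcal L}_m\rho=\mathcal L_m\rho$, and the uniform convergence of $\mathcal L_m^{i,j}\rho$ to $\mathcal L^{i,j}\rho$ on intervals $[a_m,1]$ — asserted in the prose preceding the statement — gives $\sup_{s\in[a_m,1]}|(\tilde{\mathcal L}_m-\mathcal L)\rho(s)|\to0$. On $[0,a_m)$ I would use the identity $(\tilde{\mathcal L}_m^i\rho)(s)-(\mathcal L^i\rho)(s)=\frac{s}{a_m}\big[(\mathcal L_m^i\rho)(a_m)-(\mathcal L^i\rho)(0)\big]+\big[(\mathcal L^i\rho)(0)-(\mathcal L^i\rho)(s)\big]$, valid because the interpolation weights sum to one. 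The first bracket is controlled by the same uniform convergence evaluated at the single point $a_m$ together with continuity of $\mathcal L^i\rho$ at $0$, the second by the modulus of continuity of $\mathcal L^i\rho$ on $[0,a_m)$; since $a_m\to0$ and $\mathcal L^i\rho$ is continuous on $[0,1]$ — here $\rho\in\mathcal X$, i.e. $\rho_i(0)=\rho_{i+1}(0)$, is what makes $\mathcal L^i\rho$ continuous at the corner, the one-sided limit of the Mellin term being $-\chi_k\pi\rho_{i+1}(0)$ — both brackets tend to $0$ uniformly in $s<a_m$. Combining the two regions yields (\ref{convpointnormWm}).

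For the norm bound (\ref{limnormWm}) the convex-combination remark reduces $\|\tilde{\mathcal L}_m^i\|$ to the maximum of $|(\mathcal L^i\rho)(0)|\le|\chi_k|\pi\,\|\rho\|_\infty$ and $\sup_{s\in[a_m,1]}|(\mathcal L_m^i\rho)(s)|$. For $s>0$ the angle multiplications vanish, so the latter is governed, row by row, by the discrete Lebesgue-type quantity $\sup_{s\in[a_m,1]}\sum_{h=0}^{\mu_m}\lambda_{\mu_m,h}^R\,|L^{i,i+1}(x_{\mu_m,h}^R,s)|$, and the crux is to show this is at most $|\chi_k|\pi+o(1)$. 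The target is the continuous norm: the substitution $t=s\tau$ and the homogeneity of the Mellin kernel give $\sup_{s\in(0,1]}\int_0^1|L^{i,i+1}(t,s)|\,dt=|\chi_k|\pi$, the supremum being approached as $s\to0^+$. Since $\max_k|\chi_k|<1$, once the discrete sums are bounded by this integral up to a vanishing error one obtains $\limsup_m\|\tilde{\mathcal L}_m\|\le\max_k|\chi_k|\pi<\pi$, which is (\ref{limnormWm}).

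The main obstacle is this last quadrature estimate. Because $L^{i,i+1}(t,s)$ has fixed sign for $s>0$, the integrand $f_s:=|L^{i,i+1}(\cdot,s)|$ is smooth on $[0,1]$, so Theorem \ref{corquad} applies and $|e_{\mu_m}^R(f_s)|\le\frac{\C}{\mu_m^r}E_{2\mu_m-r}(f_s^{(r)})_{\varphi^r,1}\le\frac{\C}{\mu_m^r}\|f_s^{(r)}\varphi^r\|_1$; the issue is tracking the $s$-dependence. The kernel is a peak of height $\sim(s|\sin\chi_k\pi|)^{-1}$ and width $\sim s$ near the origin, so $f_s^{(r)}\sim s^{-r-1}$ on the range $t\lesssim s$; weighting by $\varphi^r\sim t^{r/2}$ and integrating gives $\|f_s^{(r)}\varphi^r\|_1\sim s^{-r/2}$, whence $|e_{\mu_m}^R(f_s)|\lesssim(\mu_m^2 s)^{-r/2}$. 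As $\mu_m\sim m$ and $s\ge a_m\sim m^{-2+2\epsilon}$ force $\mu_m^2 s\gtrsim m^{2\epsilon}\to\infty$, this error is $o(1)$ uniformly in $s\in[a_m,1]$ for any fixed $r\ge1$, so $\sum_h\lambda_{\mu_m,h}^R|L^{i,i+1}(x_h,s)|=\int_0^1 f_s+e_{\mu_m}^R(f_s)\le|\chi_k|\pi+o(1)$ uniformly. The delicate point is precisely the uniformity of the error down to $s=a_m$, and this is what forces the exponent: the choice $\epsilon>0$ keeps $\mu_m^2 a_m\to\infty$ — equivalently, keeps the breakpoint far above the $\sim\mu_m^{-2}$ spacing of the Radau nodes near $0$, so that the number of nodes resolving the peak grows like $m^{\epsilon}$ — without which the peak would be under-resolved and the bound would fail.
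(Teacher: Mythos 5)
Your argument is sound in structure and, for the mapping property and the norm bound (\ref{limnormWm}), it follows essentially the same route as the paper, except that you reconstruct in detail what the paper delegates to Lemma \ref{propLm} and to the proof of Theorem 3 of \cite{FL}. The identification $(\mathcal L^i\varrho_i)(0)=(-\chi_k\pi\rho_i(0),-\chi_k\pi\rho_{i+1}(0),0)$, the convex-combination bound on $[0,c/m^{2-2\epsilon})$, the value $\sup_{s\in(0,1]}\int_0^1|L^{i,i+1}(t,s)|\,dt=|\chi_k|\pi$, and the quadrature error of order $(\mu_m^{2}s)^{-r/2}=O(m^{-r\epsilon})$ uniformly for $s\ge c/m^{2-2\epsilon}$ are all correct, and your fixed-sign observation --- which turns the discrete absolute row sum into the Radau rule applied to $L^{i,i+1}(\cdot,s)$ itself, so that the estimate of Lemma \ref{propLm} applies --- is exactly the right mechanism. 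Incidentally, your conclusion $\limsup_m\|\tilde{\mathcal L}_m\|\le\max_k|\chi_k|\pi<\pi$ is sharper than the bound (\ref{supnormLm}) as literally printed, whose right-hand side $\pi+\C\,r!/m^{r\epsilon}$ would only give $\limsup_m\|\tilde{\mathcal L}_m\|\le\pi$; the leading constant there must be read as $\max_k|\chi_k|\pi$ for (\ref{limnormWm}) to follow, and your derivation supplies precisely that.

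The one genuine weak point is the proof of (\ref{convpointnormWm}). On $[c/m^{2-2\epsilon},1]$ you invoke, as a black box, the uniform convergence of $\mathcal L_m^{i,j}\rho$ to $\mathcal L^{i,j}\rho$ for an \emph{arbitrary} $\rho\in C([0,1])$, citing the prose preceding the theorem. That assertion is not available for free: the quadrature error estimate (\ref{corR}), and hence Lemma \ref{propLm}, requires Sobolev smoothness of the integrand $L^{i,j}(\cdot,s)\rho(\cdot)$ and therefore only yields convergence for smooth $\rho$, e.g.\ polynomials; for merely continuous $\rho$ it must be combined with the uniform boundedness of the discrete operators and a density argument. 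This is exactly why the paper proves convergence on the dense subspace $\tilde{\PP}^{3n}$ of Lemma \ref{densesubspace} and then concludes by the Banach--Steinhaus theorem, using $\sup_m\|\tilde{\mathcal L}_m\|<\infty$. Your proposal already contains both missing ingredients --- the uniform bound on $\sup_{s}\sum_{h}\lambda_{\mu_m,h}^R|L^{i,i+1}(x_{\mu_m,h}^R,s)|$ from your norm argument, and convergence on smooth data --- so the gap closes with a standard approximation step; but as written, taking the general-$\rho$ uniform convergence as given is circular, since away from the corner it is essentially the content of (\ref{convpointnormWm}) itself.
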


\begin{theorem}\label{opercompatti}
The operators  ${\mathcal K}_m:{\mathcal X} \to {\mathcal X}$ are linear maps such that the set $\left\{{\mathcal K_m}\right\}_m$ is collectively compact and
\begin{equation} \label{convpointnormSm}
\displaystyle \lim_{m \to \infty} \|({\mathcal K}_m -{\mathcal K}){\rho}\|_{\infty}=0, \quad \forall \  \rho \in {\mathcal X}.
 \end{equation}
\end{theorem}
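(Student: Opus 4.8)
The plan is to reduce the matrix statement to its scalar blocks and then treat each block as a numerical-integration operator in the sense of collectively compact operator theory. Since the norm on $\mathcal{X}$ is the maximum of the $3n$ component sup-norms (\ref{normdirectproduct}) and $\mathcal{K}_m$ is a fixed $3n\times 3n$ array whose entries are the scalar operators $\mathcal{K}_m^{i,j}$ and $\mathcal{M}_m^{i,j}$ obtained by applying the Radau rule (\ref{Radau}) to kernels $K^{i,j}$, $M^{i,j}$ that are \emph{continuous} on $[0,1]\times[0,1]$ (the singular Mellin parts having been split off into $\mathcal{L}$), both assertions will follow once they are proved for a single scalar block: a finite array of collectively compact families is collectively compact, and pointwise convergence passes componentwise to the max-norm. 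Well-definedness of $\mathcal{K}_m$ as a map into $\mathcal{X}$ reduces to the corner identity $(\mathcal{K}_m\rho)_i(0)=(\mathcal{K}_m\rho)_{i+1}(0)$ for $i\equiv 1\pmod{3}$, which is ensured by the matching of the parametrizations at the corners, $\sigma_i(0)=\sigma_{i+1}(0)=P_k$; the same property for the limit operator $\mathcal{K}$ then comes for free, since $\mathcal{X}$ is closed and $\mathcal{K}_m\rho\to\mathcal{K}\rho$.

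Fix one block, say $(\mathcal{K}_m^{i,j}\rho)(s)=\sum_h \lambda_h K^{i,j}(x_h,s)\rho(x_h)$ with continuous kernel (the argument for $\mathcal{M}_m^{i,j}$ is identical). The Radau weights are positive and, by exactness on constants, satisfy $\sum_h\lambda_h=1$; hence $\|\mathcal{K}_m^{i,j}\rho\|_\infty\le \|K^{i,j}\|_\infty\,\|\rho\|_\infty$, giving the \emph{uniform} bound $\sup_m\|\mathcal{K}_m^{i,j}\|\le \|K^{i,j}\|_\infty$. For \emph{equicontinuity} I would use that $K^{i,j}$ is uniformly continuous on the compact square $[0,1]^2$: given $\epsilon$, for $|s_1-s_2|$ small one has $|K^{i,j}(t,s_1)-K^{i,j}(t,s_2)|<\epsilon$ uniformly in $t$, whence $|(\mathcal{K}_m^{i,j}\rho)(s_1)-(\mathcal{K}_m^{i,j}\rho)(s_2)|\le \epsilon\sum_h\lambda_h\|\rho\|_\infty\le\epsilon$, uniformly in $m$ and in $\|\rho\|_\infty\le 1$. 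Uniform boundedness together with equicontinuity make $\{\mathcal{K}_m^{i,j}\rho:m\in\NN,\ \|\rho\|_\infty\le 1\}$ relatively compact in $C([0,1])$ by Arzel\`a--Ascoli, which is exactly collective compactness of $\{\mathcal{K}_m^{i,j}\}_m$.

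For the pointwise convergence (\ref{convpointnormSm}), fix $\rho\in C([0,1])$ and write the difference at a point $s$ as a single quadrature error,
\[
(\mathcal{K}_m^{i,j}\rho)(s)-(\mathcal{K}^{i,j}\rho)(s)=-\,e_m^R\big(K^{i,j}(\cdot,s)\,\rho\big),
\]
where the integrand $g_s(t)=K^{i,j}(t,s)\rho(t)$ is continuous. Because the Radau rule has positive weights and integrates polynomials of increasing degree exactly, it converges on every continuous function, so $e_m^R(g_s)\to 0$ for each $s$. The delicate point, and the main obstacle, is to upgrade this to convergence in the sup-norm over $s$. Here I would exploit that $s\mapsto g_s$ is continuous from $[0,1]$ into $C([0,1])$ (again by uniform continuity of $K^{i,j}$), so $\{g_s:s\in[0,1]\}$ is a \emph{compact} subset of $C([0,1])$; writing $|e_m^R(g)|\le 2\,E_{2m}(g)_\infty$ from exactness on $\PP_{2m}$ and $\sum_h\lambda_h=1$, and noting that $g\mapsto E_{2m}(g)_\infty$ is $1$-Lipschitz, non-increasing in $m$, and tends to $0$ pointwise, Dini's theorem on the compact family $\{g_s\}$ yields $\sup_s|e_m^R(g_s)|\le 2\sup_s E_{2m}(g_s)_\infty\to 0$. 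This gives $\|\mathcal{K}_m^{i,j}\rho-\mathcal{K}^{i,j}\rho\|_\infty\to 0$.

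Assembling the finitely many blocks through the max-norm of $\mathcal{X}$ then yields both the collective compactness of $\{\mathcal{K}_m\}_m$ and (\ref{convpointnormSm}). Everything apart from the uniform-in-$s$ passage above is a routine application of Arzel\`a--Ascoli and the positivity of the Radau weights. I note that for this theorem the refined estimate (\ref{corR}) is \emph{not} needed, since $\rho$ is only assumed continuous; the qualitative convergence of the rule on $C([0,1])$ suffices, and (\ref{corR}) will instead enter the quantitative error analysis elsewhere.
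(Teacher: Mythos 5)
Your proposal is correct and follows essentially the same route as the paper: reduce the matrix operator to its scalar blocks with continuous kernels, use positivity and normalization of the Radau weights together with uniform continuity of the kernels to get collective compactness (your Arzel\`a--Ascoli argument is exactly the content of the result the paper cites from Kress, Theorem 12.8), and derive the pointwise convergence from the convergence of the Radau rule on $C([0,1])$. The only place you go beyond the paper is in spelling out the uniform-in-$s$ passage via Dini's theorem on the compact family $\{K^{i,j}(\cdot,s)\rho\}_s$, a detail the paper leaves implicit in its appeal to standard arguments.
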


Now, in order to compute the solution of the approximating system (\ref{sistapprox}),  we collocate suitably the equations of (\ref{sistapprox}) in  the quadrature nodes, getting an equivalent linear system.
More precisely, if we denote by $\bar{\psi}_m=\left(\bar{\psi}_{m,1},\bar{\psi}_{m,2},\ldots,\bar{\psi}_{m,3n}\right)$  the  array
$$\bar{\psi}_m=(-\pi \mathcal{I}+\tilde {\mathcal L}_m+{\mathcal K}_m)\bar{u}_m,$$
the linear system consists of the following $M_m= n(2\mu_m+\nu_m+3)$ equations
\begin{equation} \label{linerasist}
\left\{
\begin{array} {lllll}
\bar{\psi}_{m,i}\left(x_{\mu_m,\ell}^R\right)&=&\bar{g}_i\left(x_{\mu_m,\ell}^R\right), &\quad \ell=0,\ldots,\mu_m &\quad \mathrm{if} \ i \equiv 1,2 (\mathrm{mod} \ 3)  \\
\bar{\psi}_{m,i}\left(x_{\nu_m,\ell}^R\right)&=&\bar{g}_i\left(x_{\nu_m,\ell}^R\right), &\quad \ell=0,\ldots,\nu_m &\quad \mathrm{if} \ i \equiv 0 (\mathrm{mod} \ 3)
\end{array}
\right.
\end{equation}
for $i=1,\ldots,3n$, in the $M_m$ unknowns
\begin{eqnarray} \label{ail}
a_{i,\ell}= \left\{
\begin{array}{lll}
\bar{u}_{m,i}\left(x_{\mu_m,\ell}^R\right), &\quad \ell=0,\ldots,\mu_m &\quad \mathrm{if} \ i \equiv 1,2 (\mathrm{mod} \ 3)  \\
\bar{u}_{m,i}\left(x_{\nu_m,\ell}^R\right),   &\quad \ell=0,\ldots,\nu_m &\quad \mathrm{if} \ i \equiv 0 (\mathrm{mod} \ 3)
\end{array}
\right.     .
\end{eqnarray}
Denoting by $\mathbf{a}$ the array of the unknowns, by $A_m$ is  the matrix of the coefficients, and by $\mathbf{b}$
the the right-hand side vector,
we can rewrite the system (\ref{linerasist}) in the compact form
\begin{equation} \label{linearsystem2}
A_m\mathbf{a}=\mathbf{b}.
\end{equation}
We remark that it is equivalent, in some sense, to the approximating problem (\ref{sistapprox}). Indeed, let us denote by $\tilde{\RR}^{{M}_m}$  the subspace of $\RR^{M_m}$ containing all the arrays
\begin{eqnarray}
& &\mathbf{c}=\left(c_{1,0},\dots c_{1,\mu_m}, c_{2,0},\dots c_{2,\mu_m}, c_{3,0}, \dots, c_{3,\nu_m}, \dots , \nonumber \right. \\  & & \hspace{0.5cm} \left. c_{3n-2,0},\dots c_{3n-2,\mu_m}, \dots   c_{3n-1,0},\dots c_{3n-1,\mu_m}, \dots  c_{3n,0}, \dots, c_{3n,\nu_m} \right) \nonumber
\end{eqnarray}
such that $c_{i,0}=c_{i+1,0}$, $\forall i \in \{1,\dots,3n\}$, $i\equiv 1(\mathrm{mod} \ 3)$. Then, each solution $\bar{u}_m \in {\mathcal X}$ of (\ref{sistapprox}) furnishes a solution $\mathbf{a}$ of system (\ref{linearsystem2}) belonging to $\tilde{\RR}^{M_m}$. It will be sufficient to evaluate the components of the vector $\bar{u}_m$ at the suitable quadrature nodes of the Radau formula.
Viceversa, if
\begin{eqnarray}
  & &\mathbf{a}=(a_{1,0},\dots a_{1,\mu_m}, a_{2,0},\dots a_{2,\mu_m}, a_{3,0}, \dots, a_{3,\nu_m}, \dots , \nonumber \\  & & \hspace{0.5cm} a_{3n-2,0},\dots a_{3n-2,\mu_m}, \dots   a_{3n-1,0},\dots a_{3n-1,\mu_m}, \dots  a_{3n,0}, \dots, a_{3n,\nu_m} ) \nonumber
\end{eqnarray}
satisfies (\ref{linearsystem2}), then there is a unique $\bar{u}_m \in {\mathcal X}$,
solution of (\ref{sistapprox}), such that the equalities (\ref{ail}) hold true.

Consequently, we can conclude that the operator $-\pi \mathcal{I}+\tilde{\mathcal L}_m+{\mathcal K}_m$ is invertible on the space $\mathcal X$
if and only if the matrix $A_m$ is invertible on $\tilde{\RR}^{M_m}$.

Next theorem contains our main result.
\begin{theorem}\label{maintheorem}
Let $\Sigma\setminus\{P_1,\ldots,P_n\}$ be of class $C^{2}$ and $f \in C^{p}([0,1])$ with  $p$ large enough.
Assume that $\mathrm{Ker}\{-\pi \mathcal{I}+ {\mathcal L}+ {\mathcal K}\}=\{0\}$ in the Banach space ${\mathcal X}$. Then, for  sufficiently large $m$, say $m \geq m_0$,
the operators $-\pi \mathcal{I}+\tilde{ \mathcal L}_m+{\mathcal K}_m$ are invertible and their inverses are uniformly bounded on ${\mathcal X}$.

Moreover, denoting by $\mathrm{cond}(-\pi \mathcal{I}+\tilde{\mathcal L}_m+{\mathcal K}_m)$ the condition number of the operator $-\pi \mathcal{I}+\tilde{\mathcal L}_m+{\mathcal K}_m:{\mathcal X} \rightarrow {\mathcal X}$  and by $\mathrm{cond}(A_m)$ the condition number of the matrix $A_m$  in infinity norm, we have that, for any $m \geq m_0$,
\begin{equation} \label{conditionnumber}
\mathrm{cond}(A_m) \leq \mathrm{cond}(-\pi \mathcal{I}+\tilde {\mathcal L}_m+{\mathcal K}_m) \leq \C
\end{equation}
where $\C \neq \C(m)$.

Furthermore, the solutions $\bar{u}$ and $\bar{u}_m$ of systems $(\ref{sistop})$ and $(\ref{sistapprox})$, respectively, satisfy the following error estimates
\begin{equation}
\|(\bar{u}-\bar{u}_m)(s)\|_{\infty} \leq  \mathcal{C}[\|(\tilde{\mathcal{L}}_m-\mathcal{L})\bar{u}(s)\|_\infty + \|(\mathcal{K}_m-\mathcal{K})\bar{u}(s)\|_\infty
\end{equation}
where
\begin{equation} \label{errorestimate1}
\|(\tilde{\mathcal{L}}_m-\mathcal{L}) \bar{u}(s)\|_\infty \leq \left\{ \begin{array}{ll}
\C \max\left\{\left(\displaystyle \frac{1}{m^{2-2\epsilon}}\right)^{\beta},\displaystyle \frac{1}{m^{1+\epsilon}}\right\}, & s  \in \left[0,\displaystyle \frac {c}{m^{2-2\epsilon}}\right]\\
\displaystyle \frac{\C}{m^2}\frac{1}{s^{\frac{1}{2}}}, & s  \in \left[\displaystyle \frac {c}{m^{2-2\epsilon}},1\right],
\end{array} \right.
\end{equation}
with $\epsilon$  as in  $(\ref{opmodified})$,  $\displaystyle \beta=\min_{k=1,\ldots,n}\beta_k$, with $\beta_k$ as in $(\ref{behavioursolution})$ and $\C \neq \C(m)$.
\end{theorem}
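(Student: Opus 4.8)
The plan is to exploit the splitting of the operator into an invertible, non-compact part and a compact perturbation, and to treat the two by different tools. Write $\mathcal{G}_m = -\pi\mathcal{I} + \tilde{\mathcal{L}}_m$ and $\mathcal{G} = -\pi\mathcal{I} + \mathcal{L}$. By Theorem \ref{operL} one has $\lim_m\|\tilde{\mathcal{L}}_m\| < \pi$, so for $m \geq m_0$ the Neumann series gives that $\mathcal{G}_m$ is invertible with $\|\mathcal{G}_m^{-1}\| \leq (\pi - \|\tilde{\mathcal{L}}_m\|)^{-1} \leq \mathcal{C}$ uniformly in $m$. The same argument, together with the lower semicontinuity of the norm under the strong convergence $\tilde{\mathcal{L}}_m \to \mathcal{L}$ of Theorem \ref{operL}, yields $\|\mathcal{L}\| < \pi$ and hence the invertibility of $\mathcal{G}$. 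I would then factor
$$-\pi\mathcal{I} + \tilde{\mathcal{L}}_m + \mathcal{K}_m = \mathcal{G}_m\bigl(\mathcal{I} + \mathcal{G}_m^{-1}\mathcal{K}_m\bigr),$$
so that invertibility and uniform boundedness of the inverse of the left-hand side reduce to the same properties for $\mathcal{I} + \mathcal{G}_m^{-1}\mathcal{K}_m$.

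For the second factor I would invoke the theory of collectively compact operators. First I would check that $\{\mathcal{G}_m^{-1}\mathcal{K}_m\}_m$ is collectively compact and converges strongly to $\mathcal{G}^{-1}\mathcal{K}$. Strong convergence follows by writing $\mathcal{G}_m^{-1}\mathcal{K}_m\rho = \mathcal{G}_m^{-1}(\mathcal{K}_m-\mathcal{K})\rho + \mathcal{G}_m^{-1}\mathcal{K}\rho$, using the uniform bound on $\|\mathcal{G}_m^{-1}\|$ and the pointwise convergence of Theorem \ref{opercompatti}, together with the strong convergence $\mathcal{G}_m^{-1}\to\mathcal{G}^{-1}$ (which itself comes from the identity $\mathcal{G}_m^{-1}-\mathcal{G}^{-1} = \mathcal{G}_m^{-1}(\mathcal{L}-\tilde{\mathcal{L}}_m)\mathcal{G}^{-1}$ and Theorem \ref{operL}). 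Collective compactness holds because $\{\mathcal{K}_m\}$ is collectively compact (Theorem \ref{opercompatti}) and the uniformly bounded, strongly convergent family $\{\mathcal{G}_m^{-1}\}$ carries the resulting relatively compact set into a relatively compact one. Next, since by hypothesis $\mathrm{Ker}(-\pi\mathcal{I}+\mathcal{L}+\mathcal{K})=\{0\}$, Theorem \ref{theoremsolv} guarantees that $-\pi\mathcal{I}+\mathcal{L}+\mathcal{K}$ is bijective on $\mathcal{X}$, whence $\mathcal{I}+\mathcal{G}^{-1}\mathcal{K}=\mathcal{G}^{-1}(-\pi\mathcal{I}+\mathcal{L}+\mathcal{K})$ is invertible. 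The standard Anselone argument — driven by $\|(\mathcal{G}_m^{-1}\mathcal{K}_m - \mathcal{G}^{-1}\mathcal{K})\,\mathcal{G}_m^{-1}\mathcal{K}_m\| \to 0$, which holds precisely because of collective compactness plus strong convergence — then yields, for $m$ large, the invertibility of $\mathcal{I}+\mathcal{G}_m^{-1}\mathcal{K}_m$ together with a uniform bound on the inverses. Consequently $(-\pi\mathcal{I}+\tilde{\mathcal{L}}_m+\mathcal{K}_m)^{-1} = (\mathcal{I}+\mathcal{G}_m^{-1}\mathcal{K}_m)^{-1}\mathcal{G}_m^{-1}$ is uniformly bounded.

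The condition number estimate then splits in two. The bound $\mathrm{cond}(-\pi\mathcal{I}+\tilde{\mathcal{L}}_m+\mathcal{K}_m) \leq \mathcal{C}$ is immediate from the uniform boundedness just obtained for the inverse, together with the uniform boundedness of the operators themselves ($\|\tilde{\mathcal{L}}_m\|$ is bounded by Theorem \ref{operL} and $\{\mathcal{K}_m\}$ is uniformly bounded by collective compactness). The inequality $\mathrm{cond}(A_m)\leq\mathrm{cond}(-\pi\mathcal{I}+\tilde{\mathcal{L}}_m+\mathcal{K}_m)$ I would obtain from the equivalence between system \eqref{linearsystem2} and the operator equation \eqref{sistapprox} established just before the statement: writing $A_m$ as the composition of nodal evaluation, the discrete operator, and Nyström interpolation, the nodal-evaluation map has infinity-norm at most one and the Nyström interpolant reproduces the nodal data, so both $\|A_m\|_\infty$ and $\|A_m^{-1}\|_\infty$ are dominated by the corresponding operator norms on $\mathcal{X}$.

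Finally, for the error estimate I would start from $A\bar{u}=\bar{g}=A_m\bar{u}_m$, with $A=-\pi\mathcal{I}+\mathcal{L}+\mathcal{K}$ and $A_m=-\pi\mathcal{I}+\tilde{\mathcal{L}}_m+\mathcal{K}_m$, to get
$$A_m(\bar{u}-\bar{u}_m)=(A_m-A)\bar{u}=(\tilde{\mathcal{L}}_m-\mathcal{L})\bar{u}+(\mathcal{K}_m-\mathcal{K})\bar{u},$$
so that the uniform bound on $A_m^{-1}$ immediately yields the stated estimate for $\|\bar u - \bar u_m\|_\infty$. The remaining task is the pointwise bound \eqref{errorestimate1} for $\|(\tilde{\mathcal{L}}_m-\mathcal{L})\bar{u}\|_\infty$. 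On $[c/m^{2-2\epsilon},1]$, where $\tilde{\mathcal{L}}_m$ coincides with the unmodified $\mathcal{L}_m$, I would apply the Radau quadrature error of Theorem \ref{corquad} to the Mellin kernel $L^{i,j}(\cdot,s)$ tested against $\bar u$; for $s$ bounded away from the corner the kernel and its derivatives are controlled, which produces the $m^{-2}s^{-1/2}$ rate. On the modification interval $[0,c/m^{2-2\epsilon}]$ I would insert the explicit linear interpolation \eqref{opmodified} and estimate separately the interpolation error and the discretization error, now using the corner behaviour $\bar{u}_i(t)=O(t^{\beta_k})$ and $\bar u_i^{(r)}(t)\le \mathcal{C}\, t^{\beta_k-r}$ from \eqref{behavioursolution}--\eqref{behavioursolution2}; balancing the size $O((m^{-2+2\epsilon})^{\beta})$ of the solution against the $O(m^{-1-\epsilon})$ quadrature contribution gives the maximum appearing in \eqref{errorestimate1}. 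I expect this last step — the sharp estimate of the discretized Mellin operator against the singular solution in the corner region, combined with the interpolation modification — to be the main obstacle, since it must simultaneously reconcile the endpoint singularity of $\bar{u}$, the non-compactness of $\mathcal{L}$, and the quadrature error over a shrinking interval.
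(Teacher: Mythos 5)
Your proposal is correct and follows essentially the same route as the paper: the paper likewise inverts $-\pi\mathcal{I}+\tilde{\mathcal{L}}_m$ by the geometric series theorem using Theorem \ref{operL}, then invokes the collectively compact operator approximation theory (Theorem 10.8 and Problem 10.3 in Kress) together with Theorem \ref{opercompatti} for the compact part, and defers the condition number bound and the estimate \eqref{errorestimate1} to the arguments of Theorems 5 and 6 of \cite{FL}. Your write-up simply makes explicit the Anselone-type factorization and the nodal-evaluation/Nystr\"om-interpolation argument that the paper leaves to the cited references.
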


We remark that (see Theorem \ref{opercompatti})
$$\lim_{m}\|({{\mathcal K}}_m-{\mathcal K})\bar{u}(s)\|_{\infty}=0, \quad \forall s\in [0,1]$$
and the rate of convergence depends on the smoothness of the boundary $\Sigma\setminus\{P_1,\dots,P_n\}$ as well as on the behavior of the functions
 $\bar{u}_j$ on the interval $[0,1]$ (see (\ref{behavioursolution}),
(\ref{behavioursolution2})).

Moreover, we note that the previous theorem establishes the convergence of the approximate solution $\bar{u}_m$ to the exact one $\bar{u}$ in the space $\mathcal{X}$ (and not $\tilde{\mathcal{X}}$). Therefore, by reconstructing the approximate solution $u_m$ on the initial boundary $\Sigma$, it can get a finite number of discontinuity points. Nevertheless, these discontinuities do not play any role when we replace the harmonic function $u$ on $\Sigma$ with
$u_m$ in (\ref{u}) (see also (\ref{double2})),  in order to approximate the solution $u$ of the Neumann problem at points of the exterior domain.

At this point, we investigate on  a possible approximation of the right-hand side $\bar{g}$ of system (\ref{sistapprox}) in the case when it cannot be evaluated analytically.

To this end we decompose the whole boundary $\Sigma$ in $n$ smooth arcs $\tilde{\Sigma}_1,\ldots,\tilde{\Sigma}_n$, with $\tilde \Sigma_k$ connecting the corner point $P_k$ with $P_{k+1}$ ($P_{n+1} \equiv P_1$) and represented by the parametrization $\tilde \sigma_k \in C^{2}([0,1])$
\begin{equation}\label{trasf_sigma}
\tilde \sigma_k: s \in [0,1] \to (\tilde \xi_k(s), \tilde \eta_k(s)) \in \tilde \Sigma_k,
\end{equation}
such that $|\tilde \sigma_k'(s)|\neq 0$ for each $0 \leq s \leq 1$.
Then since, for any fixed $s \in [0,1]$ and $i=1,\ldots,3n$, we can write
\begin{equation} \label{gsigma}
\bar{g}_i(s)=g(\sigma_i(s))=g(\tilde \sigma_\ell(s_i)),
\end{equation}
for some $\ell=1,\ldots,n$ and with a suitable $s_i\in [0,1]$, we focus our attention on the numerical computation of
\begin{equation}\label{gi}
g(\tilde \sigma_\ell(s))= \sum_{k=1}^n \int_0^1 \phi_k(t) \log{|\tilde \sigma_\ell(s)-\tilde \sigma_k(t)|} dt, \quad s\in[0,1]
\end{equation}
where $\phi_k(t)=f(\tilde \sigma_k(t)) |\tilde \sigma_k'(t)|$.

Now, if $\ell=k$ the computation of the logarithmic kernel, when $t$ and $s$ have a relative distance of the order of the machine precision $eps$, suffers from severe loss of accuracy, because of the numerical cancellation. Then, to avoid this situation we write
$$\log{|\tilde \sigma_\ell(s)-\tilde \sigma_\ell(t)|}=\log{\frac{|\tilde \sigma_\ell(s)-\tilde \sigma_\ell(t)|}{|t-s|}}+\log{|t-s|}$$
and if $|t-s|<eps$, for the first term at the right hand side, we use the approximation
$$\log{\frac{|\tilde \sigma_\ell(s)-\tilde \sigma_\ell(t)|}{|t-s|}} \sim \log{|\tilde \sigma'_\ell(t)|}. $$
Hence, by this numerical tricks, we can rewrite (\ref{gi}) as
\begin{eqnarray}\label{gi2}
g(\tilde \sigma_\ell(s))&=&\sum_{k=1, k \neq \ell}^{n} \int_0^1 \phi_k(t) \log{|\tilde \sigma_\ell(s)-\tilde \sigma_k(t)|} dt \\ &+& \int_0^1 \phi_\ell(t) [\log{|t-s|} +\delta_\ell(t,s)] dt \nonumber
\end{eqnarray}
where
\begin{equation}
\delta_{\ell}(t,s)=\left\{\begin{array}{ll}
\log{|\tilde \sigma'_\ell(t)|}, &  |t-s|< eps, \\
\log{\frac{|\tilde \sigma_\ell(s)-\tilde \sigma_\ell(t)|}{|t-s|}}, & \mathrm{otherwise}
\end{array}\right..
\end{equation}

Now, in order to approximate the integrals appearing in (\ref{gi2}), we propose to proceed as follows
\begin{itemize}
\item for $\ell \neq k$, we use the Gauss-Legendre quadrature formula (\ref{Legendre}) obtaining

 $$ \sum_{h=1}^M \lambda_{M,h}^L \phi_k(x_{M,h}^L) \log{|\tilde \sigma_\ell(s)-\tilde \sigma_k(x_{M,h}^L)|}, $$
\item for $\ell=k$, we use a product integration rule for the first addendum getting
$$ \sum_{h=1}^M \lambda_{M,h}^L \phi_\ell (x_{M,h}^L)\sum_{\nu=0}^{M-1} c_\nu(s) p_\nu(x_{M,h}^L), $$
with $p_\nu \equiv p_\nu(v^{0,0})$ the $\nu$-th orthonormal Legendre polynomial and
$$ \displaystyle c_\nu(s)=\int_0^1 p_\nu(z) \log{|z-s|} dz$$
the $\nu$-th momentum computable by means of a recurrence formula (see, for instance, \cite{M}) and we again adopt a Gauss-Legendre quadrature formula for the second term obtaining
$$ \sum_{h=1}^M \lambda_{M,h}^L \phi_\ell (x_{M,h}^L)\delta_\ell(x_{M,h}^L,s).$$
\end{itemize}
Summarizing, we propose to approximate the right-hand side $\bar{g}$ by
$$\bar{g}_M=\left(\bar{g}_{M,1},\bar{g}_{M,2},\ldots,\bar{g}_{M,3n}\right)$$
 with
\begin{eqnarray}\label{gmi}
\bar{g}_{M,i}(s)&=&\sum_{k=1, k \neq \ell}^{n} \sum_{h=1}^M \lambda_{M,h}^L \phi_k(x_{M,h}^L) \log{|\tilde \sigma_\ell(s)-\tilde \sigma_k(x_{M,h}^L)|} \nonumber\\ & +& \sum_{h=1}^M \lambda_{M,h}^L \phi_\ell(x_{M,h}^L) \left(\sum_{\nu=0}^{M-1} c_\nu(s) p_\nu(x_{M,h}^L)+\delta_\ell(x_{M,h}^L,s) \right).
\end{eqnarray}
The following theorem establishes the corresponding error estimate.
\begin{theorem}\label{terminenoto}
Let $\Sigma\setminus \{P_1,\ldots,P_n\}$ be of class $C^{2}$ and $f \in C^p(\Sigma)$ with $p>0$. Then,  it results
\begin{equation} \label{estimateterminenoto}
\|(\bar{g}-\bar{g}_{M})(s)\|_{\infty} \leq \frac{\C}{ M}, \quad \forall s \in [0,1]
\end{equation}
where $\C \neq \C(M)$.
\end{theorem}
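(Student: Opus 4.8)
Proof proposal for Theorem \ref{terminenoto}.

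The plan is to bound, uniformly in $s\in[0,1]$, the three distinct error contributions hidden in the difference between (\ref{gi2}) and (\ref{gmi}), and to show that each of them is $O(M^{-1})$. Fix an index $i$ and the associated $\ell$ as in (\ref{gsigma}); since the sum over the arcs is finite, it suffices to estimate the error on a single generic arc and take the maximum. First I would observe that, because the Gauss--Legendre rule is exact up to degree $2M-1$, the discrete combination $\sum_{h}\lambda_{M,h}^L\phi_\ell(x_{M,h}^L)\sum_{\nu=0}^{M-1}c_\nu(s)p_\nu(x_{M,h}^L)$ equals $\int_0^1 (L_M\phi_\ell)(t)\log|t-s|\,dt$, where $L_M\phi_\ell$ denotes the Lagrange polynomial interpolating $\phi_\ell$ at the $M$ Legendre nodes. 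With this identification, $(\bar g_i-\bar g_{M,i})(s)$ splits into the off-diagonal remainders $\sum_{k\neq\ell}e_M^L\!\big(\phi_k(\cdot)\log|\tilde\sigma_\ell(s)-\tilde\sigma_k(\cdot)|\big)$, the logarithmic product-integration error $\int_0^1[\phi_\ell(t)-(L_M\phi_\ell)(t)]\log|t-s|\,dt$, and the diagonal remainder $e_M^L\!\big(\phi_\ell(\cdot)\,\delta_\ell(\cdot,s)\big)$.

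For the off-diagonal terms I would apply Theorem \ref{corquad} with $r=1$. For $\ell\neq k$ the integrand $t\mapsto\phi_k(t)\log|\tilde\sigma_\ell(s)-\tilde\sigma_k(t)|$ is smooth on $(0,1)$, the only degeneracy occurring for adjacent arcs as both $s$ and $t$ tend to their common corner, where the $t$-derivative grows like $\mathrm{dist}(s,t)^{-1}$ with $\mathrm{dist}(s,t)=|\tilde\sigma_\ell(s)-\tilde\sigma_k(t)|$. The point is that multiplication by $\varphi$ controls this blow-up: using $\tilde\sigma_k\in C^2$ one checks that $\int_0^1\varphi(t)\,\mathrm{dist}(s,t)^{-1}\,dt\le\C$ uniformly in $s$, so that the weighted derivative stays in $L^1$ with $E_{2M-2}\big((\cdot)'\big)_{\varphi,1}\le\C$, and (\ref{corL}) gives a remainder $\le\C/M$.

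For the diagonal contribution I would treat the two pieces separately. The product-integration error equals $\int_0^1[\phi_\ell-L_M\phi_\ell](t)\log|t-s|\,dt$, which by the uniform boundedness of $\sup_s\int_0^1|\log|t-s||\,dt$ and the known stability of logarithmic product rules is bounded by $\C\,E_{M-1}(\phi_\ell)_\infty$; since the assumed regularity of $f$ and $\tilde\sigma_\ell\in C^2$ yield $\phi_\ell\in C^1([0,1])$, Jackson's theorem gives $E_{M-1}(\phi_\ell)_\infty\le\C/M$. For the last remainder I would first prove that $\delta_\ell(\cdot,s)\in C^1([0,1])$ uniformly in $s$: writing $\tilde\sigma_\ell(s)-\tilde\sigma_\ell(t)=(s-t)\int_0^1\tilde\sigma_\ell'\!\big(t+u(s-t)\big)\,du$ exhibits the quotient $|\tilde\sigma_\ell(s)-\tilde\sigma_\ell(t)|/|t-s|$ as a $C^1$ function bounded away from zero (here $|\tilde\sigma_\ell'|\neq0$ and $\tilde\sigma_\ell\in C^2$ enter), so its logarithm, hence $\phi_\ell(\cdot)\delta_\ell(\cdot,s)$, lies in $C^1$ with first derivative bounded uniformly in $s$; Theorem \ref{corquad} with $r=1$ then bounds this remainder by $\C/M$ as well. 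The $eps$-threshold in the definition of $\delta_\ell$ alters the integrand only below machine precision and may be discarded in the analysis.

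Collecting the three bounds gives (\ref{estimateterminenoto}). I expect the main obstacle to lie precisely in the uniform-in-$s$ estimates near the corners: the weighted $L^1$ bound that tames the near-singular adjacent-arc integrand, and the $C^1$ regularity of $\delta_\ell$. These are the steps where only the $C^2$ smoothness of the boundary is available, and it is exactly this limited regularity---rather than the smoothness of $f$---that caps the attainable order at $M^{-1}$ and fixes the form of the estimate.
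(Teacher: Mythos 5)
Your proposal is correct and uses exactly the same three-way decomposition as the paper: the difference $\bar{g}_i-\bar{g}_{M,i}$ is split into the off-diagonal Gauss--Legendre remainders, the logarithmic product-integration error, and the remainder for the $\delta_\ell$ term, each bounded by $\C/M$. Where you diverge is in the machinery used on the first and third pieces. The paper bounds them via the uniform-norm estimate $|e_M^L(F)|\le\C E_{2M-1}(F)_\infty$ combined with the product rule (\ref{errprod}) for best approximation, simply asserting that $\phi_k\in C^1$ and $\log|\tilde\sigma_k(s_i)-\tilde\sigma_\ell(\cdot)|\in C^2$ (resp.\ $\delta_\ell(\cdot,s_i)\in C^1$); you instead invoke the weighted $L^1$ estimate (\ref{corL}) with $r=1$ and check that $\|F'\varphi\|_1$ stays bounded uniformly in $s$. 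Your route costs a little more computation but buys uniformity near the corners: for adjacent arcs the $C^2$ norm of $\log|\tilde\sigma_k(s_i)-\tilde\sigma_\ell(\cdot)|$ blows up as $s_i$ approaches the shared corner parameter, a point the paper's argument passes over in silence, whereas your $\int_0^1\varphi(t)\,\mathrm{dist}(s,t)^{-1}dt\le\C$ bound handles it explicitly. For the middle piece the paper outsources the bound to \cite[(3.11)]{MS98}, while you reconstruct its proof via the identity $\sum_h\lambda_{M,h}^L\phi_\ell(x_{M,h}^L)\sum_\nu c_\nu(s)p_\nu(x_{M,h}^L)=\int_0^1(L_M\phi_\ell)(t)\log|t-s|\,dt$ and the stability of the logarithmic product rule; note that the stability constant $\sup_s\sum_h|w_h(s)|\le\C$ (not merely the Lebesgue constant of $L_M$, which grows like $\sqrt{M}$) is the ingredient that makes your $\C E_{M-1}(\phi_\ell)_\infty$ bound legitimate, and you do cite it, so the argument closes.
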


Let us observe that under the hyphotesis that the boundary $\Sigma\setminus \{P_1,\ldots,P_n\}$ is ($q+2$)-times differentiable, for some $q>0$, the  approximate right-hand side $\bar{g}_{M}$ tends to the exact one $\bar{g}$ with a rate of convergence of order $1/M^r$ where $r=\min\{q+1,p\}$.

We also remark that if we introduce this approximation of the right-hand side $\bar{g}$ in the proposed numerical procedure, we really solve the following system
\begin{equation}
(-\pi \mathcal{I}+\tilde {\mathcal L}_m+{\mathcal K}_m)\bar{u}_m=\bar{g}_M
\end{equation}
instead of (\ref{sistapprox}).
However in this case Theorem \ref{maintheorem} still holds true but the error estimate becomes
$$
\|(\bar{u}-\bar{u}_{m})(s)\|_{\infty} \leq  \mathcal{C}[\|(\tilde{\mathcal{L}}_m-\mathcal{L})\bar{u}(s)\|_\infty + \|(\mathcal{K}_m-\mathcal{K})\bar{u}(s)\|_\infty+ \|(\bar{g}-\bar{g}_{M})(s)\|_{\infty}].$$

\subsection{Approximation of the Neumann solution} \label{NeumannSolution}
In this subsection we propose to approximate the solution of our initial problem (\ref{Neumann}), by taking advantage of the numerical results provided by the method described in the previous subsections.

To this end we note that, according to the parametric representation (\ref{trasf_sigma}) of the arcs $\tilde \Sigma_k$
as well as to the decomposition (\ref{sigma}) and the corresponding parametrizations (\ref{trasf}),
$\forall (x,y) \in \RR^2 \setminus D$ the solution $u(x,y)$, defined in (\ref{u}), can be rewritten as
\begin{eqnarray} \label{double2}
u(x,y)&=&-\frac{1}{2 \pi}  \left\{ \sum_{k=1}^n \int_0^1 \phi_k(t) \log{|(\tilde \xi_k(t),\tilde \eta_k(t))-(x,y)|}dt \right.\nonumber \\ && \left. - \sum_{i=1}^{3n} \int_0^1 H_i(x,y,t) \bar{u}_i(t) dt  \right\},
\end{eqnarray}
where,  $\phi_k(t)=f(\tilde \sigma_k(t)) |\tilde \sigma'_k(t)|$ for $k \in \{1,\ldots,n\}$,   $\bar{u}_i(t)=u_i(\sigma_i(t))$ for  $i \in \{1,\ldots,3n\}$, and
\[
H_i(x,y,t)=\frac{\eta'_i(t)[x-\xi_i(t)]-\xi'_i(t)[y-\eta_i(t)]}{[x-\xi_i(t)]^2+[y-\eta_i(t)]^2}.
\]
We propose to approximate $u(x,y)$ by means of the function
\begin{eqnarray}\label{um}
u_m(x,y)&=&-\frac{1}{2 \pi} \left\{\sum_{k=1}^n \int_0^1  \phi_k(t) \log{|(\tilde \xi_k(t),\tilde \eta_k(t))-(x,y)|}dt   \right. \nonumber\\ & - & \left. \sum_{\scriptsize{\begin{array}{c}
i=1\\i \equiv 1,2(\mathrm{mod} \ 3)\end{array}}}^{3n}  \sum_{k=0}^{\mu_m} \lambda_{\mu_m,k}^R \ H_i(x,y,x_{\mu_m,k}^R) \bar{u}_{m,i}(x_{\mu_m,k}^R)\right. \nonumber\\ & - & \left.  \sum_{\scriptsize{\begin{array}{c}
i=1\\i \equiv 3(\mathrm{mod} \ 3)\end{array}}}^{3n}  \sum_{k=0}^{\nu_m} \lambda_{\nu_m,k}^R \ H_i(x,y,x_{\nu_m,k}^R) \bar{u}_{m,i}(x_{\nu_m,k}^R) \right\},
\end{eqnarray}
obtained by replacing in (\ref{double2}) each $\bar{u}_i$ with $\bar{u}_{m,i}$ and, then, by applying the suitable Radau quadrature formula in order to compute the integrals
\[\int_0^1 H_i(x,y,t) \bar{u}_{m,i}(t) dt. \]
Let us note that the quantities $\bar{u}_{m,i}(x_{\nu_m,k}^R)$ and $\bar{u}_{m,i}(x_{\mu_m,k}^R)$ involved in (\ref{um}) are just the solutions of the linear system (\ref{linerasist}).

Moreover, when we are not able to compute analytically the integrals
\[\int_0^1 \phi_k(t) \log{|(\tilde \xi_k(t),\tilde \eta_k(t))-(x,y)|} dt\]
in (\ref{um}), we also approximate them by means of a suitable quadrature formula.
For instance, we can use the Gauss-Legendre quadrature formula (\ref{Legendre}) with $N$ nodes and, in this way, we get the approximate solution
\begin{eqnarray}\label{umN}
u_{m,N}(x,y)&=&-\frac{1}{2 \pi} \left\{  \sum_{k=1}^n \sum_{h=1}^N \lambda_{N,h}^L \phi_k(x_{N,h}^L) \log{|(\tilde \xi_k(x_{N,h}^L),\tilde \eta_k(x_{N,h}^L))-(x,y)|} \right.\nonumber\\ & - & \left. \sum_{\scriptsize{\begin{array}{c}
i=1\\i \equiv 1,2(\mathrm{mod} \ 3)\end{array}}}^{3n}  \sum_{k=0}^{\mu_m} \lambda_{\mu_m,k}^R \ H_i(x,y,x_{\mu_m,k}^R) \bar{u}_{m,i}(x_{\mu_m,k}^R)\right. \nonumber\\ & - & \left.  \sum_{\scriptsize{\begin{array}{c}
i=1\\i \equiv 3(\mathrm{mod} \ 3)\end{array}}}^{3n}\sum_{k=0}^{\nu_m} \lambda_{\nu_m,k}^R \ H_i(x,y,x_{\nu_m,k}^R) \bar{u}_{m,i}(x_{\nu_m,k}^R) \right\},
\end{eqnarray}
The following theorem gives an error estimate for both the approximations (\ref{um}) and (\ref{umN}).
\begin{theorem} \label{harmonicerror}
Let the assumptions of Theorem \ref{maintheorem} be satisfied and let $u$ be the solution of the Neumann problem (\ref{Neumann}).
Then, denoted by $u_m$ and  $u_{m,N}$ the approximations (\ref{um}) and (\ref{umN}), respectively, $\forall (x,y) \in \RR^2 \setminus D$ we have the following pointwise error estimates
\begin{eqnarray}
|u(x,y)-u_m(x,y)| &\leq &  \frac {\C}{m}\left( \frac{1}{d^2}+\frac{1}{d} \right)+\frac{\C'}{d}\left\|\bar{u}-\bar{u}_m\right\|_{\infty}, \label{stima1}\\
|u(x,y)-u_{m,N}(x,y)| &\leq & \frac {\C}{m}\left( \frac{1}{d^2}+\frac{1}{d} \right)+\frac{\C'}{d}\left\|\bar{u}-\bar{u}_m\right\|_{\infty} + \frac{\C''}{d N}, \label{stima2}
\end{eqnarray}
where $d=\displaystyle \min_{k=1,\ldots,n}d_k$ with $d_k=\displaystyle \min_{0 \leq t \leq 1}|(x,y)-(\tilde \xi_k(t),\tilde \eta_k(t))|$. and $\C$, $\C'$ and $\C''$  are positive constants independent of $(x,y)$, $m$ and $N$.
\end{theorem}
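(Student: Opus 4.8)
The plan is to write the pointwise error as the sum of two independent effects: the replacement of the exact boundary density $\bar{u}_i$ by the computed Nystr\"om values $\bar{u}_{m,i}$, and the error of the Radau quadrature used to discretize the double-layer integrals in (\ref{double2}). Both will be controlled through the distance $d$, which bounds the geometric kernels $H_i$ from above. Subtracting (\ref{um}) from (\ref{double2}), the logarithmic single-layer integrals are identical in the two expressions and cancel, leaving
\begin{equation}
u(x,y)-u_m(x,y)=\frac{1}{2\pi}\sum_{i=1}^{3n}\left[\int_0^1 H_i(x,y,t)\,\bar{u}_i(t)\,dt-\sum_{h}\lambda_{m_i,h}^{R}\,H_i(x,y,x_{m_i,h}^{R})\,\bar{u}_{m,i}(x_{m_i,h}^{R})\right],
\end{equation}
where $m_i=\mu_m$ if $i\equiv 1,2\,(\mathrm{mod}\ 3)$ and $m_i=\nu_m$ if $i\equiv 0\,(\mathrm{mod}\ 3)$, with $m_i\sim m$. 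Adding and subtracting the Radau sum of $H_i(x,y,\cdot)\bar{u}_i$ splits the $i$-th summand into the quadrature remainder $e_{m_i}^R(H_i(x,y,\cdot)\bar{u}_i)$ and a sampling term, obtained by applying the same rule to $H_i(x,y,\cdot)(\bar{u}_i-\bar{u}_{m,i})$.

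For the quadrature remainder I would apply Theorem \ref{corquad} with $r=1$. This is the largest order available, because the arcs being only of class $C^{2}$ makes $t\mapsto H_i(x,y,t)$ merely $C^1$; thus $|e_{m_i}^R(F_i)|\le \frac{\C}{m}\,\|F_i'\,\varphi\|_1$ with $F_i(t)=H_i(x,y,t)\bar{u}_i(t)$. Differentiating the rational kernel $H_i$ and using that its denominator equals $|(x,y)-\sigma_i(t)|^2\ge d^2$, while the numerator and the first two derivatives of $\sigma_i$ stay bounded, I obtain $|H_i(x,y,t)|\le \C/d$ and $|\partial_t H_i(x,y,t)|\le \C(1/d+1/d^2)$. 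Together with the corner behaviour (\ref{behavioursolution})--(\ref{behavioursolution2}), which gives $|\bar{u}_i(t)|\le \C$ and $|\bar{u}_i'(t)|\le \C\,t^{\beta_k-1}$, this leads to
\begin{equation}
\|F_i'\,\varphi\|_1\le \C\left(\frac1d+\frac1{d^2}\right)\int_0^1|\bar{u}_i(t)|\,\varphi(t)\,dt+\frac{\C}{d}\int_0^1 t^{\beta_k-1}\varphi(t)\,dt.
\end{equation}
The second integral converges precisely because $\beta_k>\tfrac12$, so that $t^{\beta_k-1}\varphi(t)\sim t^{\beta_k-\frac12}$ is integrable at the corner; hence $\|F_i'\,\varphi\|_1\le \C(1/d+1/d^2)$ and the quadrature remainder is bounded by $\frac{\C}{m}(1/d^2+1/d)$.

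For the sampling term I would exploit that the Radau weights are positive and, the rule being exact on constants, sum to one. Combined with $|H_i|\le \C/d$ this gives
\begin{equation}
\left|\sum_{h}\lambda_{m_i,h}^{R}\,H_i(x,y,x_{m_i,h}^{R})\,(\bar{u}_i-\bar{u}_{m,i})(x_{m_i,h}^{R})\right|\le \frac{\C}{d}\,\|\bar{u}_i-\bar{u}_{m,i}\|_\infty\le \frac{\C}{d}\,\|\bar{u}-\bar{u}_m\|_\infty.
\end{equation}
Summing the $3n$ contributions and absorbing the constant factors proves (\ref{stima1}). For (\ref{stima2}) the only additional term is $u_m-u_{m,N}$, equal to $\frac{1}{2\pi}\sum_{k=1}^n e_N^L\!\left(\phi_k\,\log|\tilde\sigma_k(\cdot)-(x,y)|\right)$. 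Since $(x,y)\notin\Sigma$ the integrand is $C^1$ in $t$, so Theorem \ref{corquad} with $r=1$ applies; using $\big|\frac{d}{dt}\log|\tilde\sigma_k(t)-(x,y)|\big|\le \C/d$ and the integrability of the logarithmic factor, the weighted $L^1$ norm of the derivative is $\le \C/d$, whence this term contributes $\frac{\C''}{dN}$. I expect the main obstacle to be the uniform kernel estimates: differentiating $H_i$ and the logarithm in $t$ and extracting clean negative powers of $d$ valid for every $(x,y)\notin\Sigma$, while simultaneously verifying $F_i\in W^1_1$ despite the unbounded $\bar{u}_i'$ at the corner. The resolution is the confluence noted above — the order $r=1$ is both forced by the $C^{2}$ regularity of $\Sigma$ and exactly sufficient to make $\bar{u}_i'\varphi$ integrable, since $\varphi\sim\sqrt{t}$ and $\beta_k>\tfrac12$.
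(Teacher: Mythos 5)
Your proposal is correct and follows essentially the same route as the paper: split $u-u_{m,N}$ into $u-u_m$ plus the Gauss--Legendre remainder $R_N$ of the logarithmic single-layer integrals, handle $u-u_m$ by adding and subtracting the Radau rule applied to $H_i(x,y,\cdot)\bar{u}_i$ (quadrature remainder via Theorem \ref{corquad} with $r=1$ plus a sampling term controlled by $\|H_i\|_\infty\le \C/d$ and the positivity of the weights), and bound $R_N$ via (\ref{corL}). The only difference is that the paper outsources the proof of (\ref{stima1}) to Theorem 7 of \cite{FL}, whereas you write out that argument explicitly, including the correct observation that $\bar{u}_i'\varphi$ is integrable near the corners because $\beta_k>\tfrac12$.
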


Let us remark that the first addendum on the right-hand side of (\ref{stima1}) and (\ref{stima2}) and the last addendum on the right-hand side of (\ref{stima2}) could converge to zero with a rate greater than $1/m$ and $1/N$, respectively,  if the boundary $\Sigma\setminus \{P_1,\ldots,P_n\} \in C^{q+2}$, with $q>0$. Morover, from the previous estimates, we can deduce that the convergence becomes faster and faster as well as the point $(x,y)$ moves away from the boundary $\Sigma$.

\section{Proofs}\label{proofs}
\begin{proof}[Proof of Theorem \ref{theoremsolv}]
From well known results (see, for instance, \cite[p. 393] {A}) it follows that for any array of functions
$\rho=(\rho_1,\ldots,\rho_{3n})^T \in  {\mathcal X}$,
setting $ \varrho_i=(\rho_i, \rho_{i+1}, \rho_{i+2})^T \in C([0,1])^{^3}$,
$\forall i \in \{1,\ldots,3n\}$, $i \equiv 1 (\mathrm{mod} \ 3)$, one has
\[
{\mathcal L}^i \varrho_i \in C([0,1])^{^3}.
\]

Moreover, it is easy to see that ${\mathcal L}\rho \in {\mathcal X}$ and that if $\|\rho\|_{\infty} \leq 1$,
we have
\begin{eqnarray*}
\|{\mathcal L}\rho\|_{\infty}& = &\max_{\scriptsize{\begin{array}{c}i=1,\ldots,3n \\ i\equiv 1 (\mathrm{mod} \ 3)\end{array}}}\left\|{\mathcal L}^i \varrho_i \right\|_{\infty}
 \leq  \max_{\scriptsize{\begin{array}{c}i=1,\ldots,3n \\ i\equiv 1  (\mathrm{mod} \ 3)\end{array}}}  \left|\chi_{\frac{i-1}{3}+1}\right|\pi< \pi,
\end{eqnarray*}
from which  $\|{\mathcal L}\| < \pi.$
Then, being $\mathcal{I}:{\mathcal X} \to {\mathcal X}$ with $\|-\pi \mathcal{I}\|=\pi$,   by applying the geometric series theorem, we can deduce that the operator $(-\pi \mathcal{I}+ {\mathcal L})^{-1}: {\mathcal X} \to {\mathcal X} $  exists and is bounded
with
\[
\|(-\pi \mathcal{I}+ {\mathcal L})^{-1}\| \leq \frac{1}{\pi - \|{\mathcal L}\|}.
\]
Consequently, equation (\ref{sistop}) is equivalent to the following one
\begin{equation} \label{sistop2}
\bar{u}+(-\pi \mathcal{I}+ {\mathcal L})^{-1} {\mathcal{K}} \bar{u}=(-\pi \mathcal{I}+ {\mathcal{L}})^{-1} \bar{g}.
\end{equation}
Now, let us note that the operator $(-\pi \mathcal{I}+ {\mathcal L})^{-1} {\mathcal{K}}$ is compact since ${\mathcal{K}}$ also maps ${\mathcal X}$ into ${\mathcal X}$ and it is compact being matrix of compact operators.
Thus for equation (\ref{sistop})
the Fredholm alternative holds true and from the hypothesis it follows that system (\ref{sistop}) is unisolvent in ${\mathcal X}$ for each right-hand side $\bar{g} \in {\mathcal X}$. Finally, if $\bar{g}\in \tilde{\mathcal X}$ then $\bar{u}=(-\pi \mathcal{I}+ {\mathcal L}+\mathcal{K})^{-1}\bar{g} \in \tilde{\mathcal X}$. Indeed, since by (\ref{propr_eta}) $(-\pi \mathcal{I}+ {\mathcal L}+\mathcal{K})$ is invertible in $\mathcal{X}$, then there exists an array $\bar{\varphi}\in \tilde{\mathcal X} $ such that $\bar{\varphi}=(-\pi \mathcal{I}+ {\mathcal L}+\mathcal{K})^{-1}\bar{g}$. Hence, by the assumption we can deduce $\bar{\varphi} \equiv \bar{u}$.
\end{proof}
In order to prove Theorem \ref{operL} we need the following lemmas which can be proved by proceeding as in the proof of lemmas 2 and 3 in \cite{FL}.
\begin{lemma}\label{propLm}
Let
\[
L(t,s)=-\frac{s \sin {(\chi \pi)}}{s^2+2ts \cos {(\chi \pi)}+t^2}, \quad t,s \in [0,1],
\]for some $\chi \in \RR$, $|\chi|<-1$, and let  $e_m$   be the functional defined as in $(\ref{corR})$.
Then, for each $s \in  (0,1]$ one has
$$e_m(L(\cdot,s)) \leq \C \frac{r!}{m^r} \frac{1}{s^{r/2}},$$
where $r \in \NN$ and $\C \neq \C(m)$.
\end{lemma}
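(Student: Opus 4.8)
The plan is to reduce the assertion to the Radau quadrature error bound (\ref{corR}) of Theorem \ref{corquad}, applied to the single-variable function $f=L(\cdot,s)$ with $s\in(0,1]$ held fixed, and then to exploit the homogeneity of the Mellin kernel to expose the factor $s^{-r/2}$. First I would observe that the denominator $t^2+2ts\cos(\chi\pi)+s^2$ has strictly negative discriminant whenever $\chi\pi\in(-\pi,\pi)\setminus\{0\}$; its roots are $s\,(-e^{\pm i\chi\pi})$, which stay off the nonnegative real axis. Hence $t\mapsto L(t,s)$ is $C^\infty$ on $[0,1]$ for every fixed $s>0$, so $L(\cdot,s)\in W^1_r$ and (\ref{corR}) applies. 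Bounding the weighted best-approximation error trivially by the weighted $L^1$-norm (taking the zero polynomial) gives
\[
|e_m(L(\cdot,s))|\le \frac{\C}{m^r}\,E_{2m-r}\bigl(\partial_t^r L(\cdot,s)\bigr)_{\varphi^r,1}\le \frac{\C}{m^r}\int_0^1\bigl|\partial_t^r L(t,s)\bigr|\,\varphi^r(t)\,dt .
\]

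Next I would use that $L$ is positively homogeneous of degree $-1$, namely $L(t,s)=s^{-1}\Phi(t/s)$ with $\Phi(x)=L(x,1)=-\sin(\chi\pi)/(x^2+2x\cos(\chi\pi)+1)$, so that $\partial_t^r L(t,s)=s^{-(r+1)}\Phi^{(r)}(t/s)$. Substituting $u=t/s$ and using $\varphi^r(t)=(t(1-t))^{r/2}\le t^{r/2}=(su)^{r/2}$ on $[0,1]$, the exponents collapse to $-r/2$ and I obtain
\[
\int_0^1\bigl|\partial_t^r L(t,s)\bigr|\,\varphi^r(t)\,dt\le \frac{1}{s^{r/2}}\int_0^{1/s}u^{r/2}\bigl|\Phi^{(r)}(u)\bigr|\,du\le \frac{1}{s^{r/2}}\int_0^{\infty}u^{r/2}\bigl|\Phi^{(r)}(u)\bigr|\,du .
\]
The \emph{structural} point is that this last integral is finite and independent of $s$: near $u=0$ the factor $u^{r/2}$ is integrable and $\Phi^{(r)}$ is bounded, while the degree $-1$ homogeneity forces $\Phi^{(r)}(u)=O(u^{-(r+2)})$ as $u\to\infty$, so the integrand decays like $u^{-(r/2+2)}$.

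Finally I would produce the factor $r!$ by differentiating $\Phi$ explicitly. Writing $\Phi(u)=-\sin(\chi\pi)\,[(u+e^{i\chi\pi})(u+e^{-i\chi\pi})]^{-1}$ and splitting into partial fractions, the identity $\frac{d^r}{du^r}(u+c)^{-1}=(-1)^r r!\,(u+c)^{-(r+1)}$ yields
\[
|\Phi^{(r)}(u)|\le r!\,\frac{1}{|u+e^{i\chi\pi}|^{\,r+1}},\qquad u\in[0,\infty),
\]
since the two conjugate factors have equal modulus for real $u$. Because $|u+e^{i\chi\pi}|^2=u^2+2u\cos(\chi\pi)+1\ge(1-|\cos(\chi\pi)|)(u^2+1)$ stays bounded away from $0$ and grows like $u^2$, the bound $\int_0^\infty u^{r/2}|\Phi^{(r)}(u)|\,du\le\C\,r!$ follows, with $\C$ independent of $m$ and $s$ (though depending on $\chi$ and $r$). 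Chaining the three displays gives the claim. The main obstacle is precisely this third step: keeping the dependence on $r$ explicit (the $r!$) while verifying that the tail integral converges \emph{uniformly in $s$}, which hinges on the $-1$ homogeneity of the Mellin kernel together with the denominator's roots $-e^{\pm i\chi\pi}$ being uniformly separated from $[0,\infty)$ for fixed $\chi$ with $|\chi|<1$, $\chi\neq0$.
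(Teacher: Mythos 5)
Your proof is correct and follows exactly the route that the paper itself only points to (the paper gives no proof of Lemma \ref{propLm}, deferring to lemmas 2 and 3 of \cite{FL}): apply the Radau error estimate (\ref{corR}) to $L(\cdot,s)$, majorize the best approximation error by the weighted $L^1$-norm of $\partial_t^r L(\cdot,s)\,\varphi^r$, and use the degree $-1$ homogeneity of the Mellin kernel together with the partial-fraction bound $|\partial_t^r L(t,s)|\le r!\,|t+se^{i\chi\pi}|^{-(r+1)}$ to extract the factor $r!\,s^{-r/2}$ after the substitution $u=t/s$. The only caveats are cosmetic: the hypothesis ``$|\chi|<-1$'' in the statement is a typo for $-1<\chi<1$, $\chi\neq 0$ (which you implicitly and correctly use to keep the roots $-se^{\pm i\chi\pi}$ off $[0,1]$), and your constant depends on $r$ and $\chi$, which is all the lemma asserts since it only requires $\C\neq\C(m)$.
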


\begin{lemma}\label{densesubspace}
Let ${\mathcal X}$ be the space of functions defined in $(\ref{space})$ and let
$$\PP^{^{3n}}=\underbrace{\PP \times \PP \times \ldots \times \PP}_{3n \ \mathrm{times}} ,$$
where $\PP$ is the set of all polynomials on $[0,1]$. Then the space
\begin{equation} \label{ptilde}
\tilde{\PP}^{^{3n}}=\PP^{^{3n}}\cap {\mathcal X}
\end{equation}
is a dense subspace of ${\mathcal X}$.
\end{lemma}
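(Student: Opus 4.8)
The plan is to argue that $\tilde{\PP}^{3n} = \PP^{3n} \cap \mathcal{X}$ is a linear subspace of $\mathcal{X}$ (being the intersection of two linear subspaces of $C([0,1])^{3n}$), so that only its density has to be established. Fix an arbitrary array $f = (f_1,\ldots,f_{3n})^T \in \mathcal{X}$ and $\varepsilon > 0$; I will produce a polynomial array $p = (p_1,\ldots,p_{3n})^T \in \tilde{\PP}^{3n}$ with $\|f - p\|_\infty < \varepsilon$. First I would apply the Weierstrass approximation theorem componentwise to obtain polynomials $q_1,\ldots,q_{3n}$ with $\|f_j - q_j\|_\infty < \varepsilon/2$ for every $j$. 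The array $(q_1,\ldots,q_{3n})$ need not lie in $\mathcal{X}$, since the corner conditions $q_i(0) = q_{i+1}(0)$ (for $i \equiv 1\,(\mathrm{mod}\ 3)$) may fail; the substance of the proof is to correct this defect without spoiling the uniform approximation.

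The key observation is that the constraints defining $\mathcal{X}$ only involve the values of the components at $s = 0$, and that each index $j$ participates in at most one of them: if $j \equiv 1\,(\mathrm{mod}\ 3)$ it appears in the condition $f_j(0) = f_{j+1}(0)$, if $j \equiv 2\,(\mathrm{mod}\ 3)$ it appears in $f_{j-1}(0) = f_j(0)$, and if $j \equiv 0\,(\mathrm{mod}\ 3)$ it is unconstrained. Hence the corrections can be carried out independently, index by index. For each $j$ with $j \equiv 1,2\,(\mathrm{mod}\ 3)$, letting $c_k = f_j(0)$ denote the common corner value prescribed by $f \in \mathcal{X}$, I would set
\[
p_j(s) = q_j(s) + \big(c_k - q_j(0)\big),
\]
and for $j \equiv 0\,(\mathrm{mod}\ 3)$ simply take $p_j = q_j$. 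Each $p_j$ is again a polynomial, and by construction $p_j(0) = c_k$, so that $p_i(0) = p_{i+1}(0)$ for all $i \equiv 1\,(\mathrm{mod}\ 3)$; thus $p \in \PP^{3n} \cap \mathcal{X} = \tilde{\PP}^{3n}$.

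It remains to control the error. Since $|c_k - q_j(0)| = |f_j(0) - q_j(0)| \le \|f_j - q_j\|_\infty < \varepsilon/2$, the constant shift changes the sup-norm of each component by less than $\varepsilon/2$, whence
\[
\|f_j - p_j\|_\infty \le \|f_j - q_j\|_\infty + |c_k - q_j(0)| < \varepsilon,
\]
and taking the maximum over $j$ yields $\|f - p\|_\infty < \varepsilon$. I do not expect any serious obstacle here: the only point requiring care is the bookkeeping modulo $3$ that guarantees the corner conditions decouple, so that a single additive constant per component suffices to enforce membership in $\mathcal{X}$ while preserving the Weierstrass estimate. Note that the weaker constraint set of $\mathcal{X}$, as opposed to $\tilde{\mathcal{X}}$ which also couples the endpoints at $s=1$, is precisely what makes this simple constant correction possible.
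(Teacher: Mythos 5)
Your proof is correct and complete. The paper itself does not prove this lemma: it only remarks that Lemmas \ref{propLm} and \ref{densesubspace} ``can be proved by proceeding as in the proof of lemmas 2 and 3 in [FL]'', so there is no in-text argument to compare against. Your argument is the natural one and it works: the constraints defining $\mathcal{X}$ are finitely many linear conditions involving only the values at $s=0$, each component index enters at most one of them (the pairs $(i,i+1)$ with $i\equiv 1 \pmod 3$ are disjoint, and indices $\equiv 0 \pmod 3$ are free), so after a componentwise Weierstrass approximation a single additive constant per component restores membership in $\mathcal{X}$ at the cost of at most doubling the uniform error. Your closing observation is also the right one to make: it is precisely because $\mathcal{X}$, unlike $\tilde{\mathcal{X}}$, does not couple the endpoint values at $s=1$ that a constant shift suffices; for $\tilde{\mathcal{X}}$ one would need a correction polynomial interpolating prescribed values at both endpoints, which is still easy but not a mere translate. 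No gap to report.
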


\begin{proof}[Proof of Theorem \ref{operL}]
At first we note that the operators $\tilde{\mathcal L}_m$ map ${\mathcal X}$ into ${\mathcal X}$. Indeed for any $i=1,\ldots,3n$,  $i\equiv 1 (\mathrm{mod} \ 3)$ and for any array of functions
${\rho}=(\rho_1,\ldots,\rho_{3n})^T \in  {\mathcal X}$,
one has that
\[
\tilde{{\mathcal L}}_m^i \varrho_i \in C([0,1])^{^3} \quad \mathrm{and} \quad (\tilde{{\mathcal L}}_m^i \varrho_i)(0)=({\mathcal L}\varrho_i)(0),
\]
with $ \varrho_i=(\rho_i, \rho_{i+1}, \rho_{i+2})^T$.\\
Then $(\tilde {\mathcal L}_m{\rho})(0)=({\mathcal L}{\rho})(0)$ and, consequently,
$\tilde{\mathcal L}_m{\rho} \in {\mathcal X}$. \newline
Now, in order to prove (\ref{limnormWm}), we observe that
for any
${\rho}=(\rho_1,\ldots,\rho_{3n})^T \in {\mathcal X}$
such that $\|{\rho}\|_{\infty} \leq 1$ and for each $ \varrho_i=(\rho_i, \rho_{i+1}, \rho_{i+2})^T$,  we have\\

$\|\tilde{\mathcal L}_m{\rho}\|_{\infty}$
\begin{eqnarray}\label{estimatenormWm}
 &= &\hspace*{-0.8 cm} \max_{\scriptsize{\begin{array}{c}i=1,\ldots,3n \\ i\equiv 1  (\mathrm{mod} \ 3)\end{array}}} \left\|\tilde{{\mathcal L}}_m^i \varrho_i \right\|_{\infty} \nonumber \\ &= &\hspace*{-0.8 cm} \max_{\scriptsize{\begin{array}{c}i=1,\ldots,3n \\ i\equiv 1  (\mathrm{mod} \ 3)\end{array}}} \hspace{-0.6 cm} \max\left\{\sup_{s \in \left[0,\frac{c}{m^{2-2\epsilon}}\right]}\|(\tilde{{\mathcal L}}_m^i \varrho_i)(s)\|_{\infty}, \sup_{s \in\left[\frac{c}{m^{2-2\epsilon}},1\right]}\|(\tilde{{\mathcal L}}_m^i \varrho_i)(s)\|_{\infty}\right\}.
\end{eqnarray}
At this point, by repeating word by word the proof of Theorem 3 in \cite{FL}, and taking into account that the number of the quadrature points $\mu_m$ is a linear function of $m$,  one can prove that
\begin{equation} \label{supnormLm}
\|\tilde{\mathcal L}_m{\rho}\|_{\infty} \leq \pi+ \C \ \frac{r!}{m^{r \epsilon}}, \quad r \in \NN,
\end{equation}
from which we deduce (\ref{limnormWm}).
Now we prove (\ref{convpointnormWm}). To do this, we observe that from (\ref{supnormLm}) it follows that
the operators $\tilde {\mathcal L}_m:{\mathcal X} \to {\mathcal X}$ are uniformly bounded with respect to $m$, i.e.
\begin{equation} \label{assertion(ii)}
\displaystyle \sup_m \|\tilde{\mathcal L}_m \| < \infty.
\end{equation}
Moreover, by proceeding analogously to the proof of Theorem 3 in \cite{FL}, we can prove that for any ${\rho}=(\rho_1,\ldots,\rho_{3n})^T \in \tilde{\PP}^{3n}$, setting $ \varrho_i=(\rho_i, \rho_{i+1}, \rho_{i+2})^T $, one has
\begin{equation} \label{Lmi-L}
\lim_{m \to \infty} \left\| (\tilde{{\mathcal L}}_m^i- {\mathcal L}^i) \varrho_i \right\|_\infty = 0, \quad \forall i \in \{1,\ldots,3n\},  \ i \equiv 1(\mathrm{mod} \ 3).
\end{equation}
Hence, taking into account Lemma \ref{densesubspace}, we can deduce (\ref{convpointnormWm}) by applying the Banach-Steinhaus theorem (see, for instance, \cite[p. 517]{A}). The proof is complete.
\end{proof}

\begin{proof}[Proof of Theorem \ref{opercompatti}]
At first let us note that the operators ${\mathcal K}_m$ map ${\mathcal X}$ into ${\mathcal X}$ and the set $\{{\mathcal K}_m\}_m$ is collectively compact if so is the set  $\{\mathcal{K}_m^{i,j}\}_m$ for any $i,j \equiv 1,2 (\mathrm{mod} \ 3)$ with $|i-j|\neq 1$ or $i \equiv 0 (\mathrm{mod} \ 3)$ and the set $\{\mathcal{M}_m^{i,j}\}_m$ for any $i,j \equiv 1,2 (\mathrm{mod} \ 3)$ with $|i-j|= 1$. Moreover, by  definition, it results that $\forall  \rho=(\rho_1,\ldots,\rho_{3n}) \in {\mathcal X}$,
 if
\begin{equation}\label{convpunt1}
\lim_{m \to \infty} \|(\mathcal{K}_m^{i,j}-\mathcal{K}^{i,j}) \rho_j\|_\infty=0
\end{equation}
when $i,j \equiv 1,2(\mathrm{mod} \ 3)$  with $|i-j|\neq 1$  or $i \equiv 0(\mathrm{mod} \ 3)$, and
\begin{equation}\label{convpunt2}
\lim_{m \to \infty} \|(\mathcal{M}_m^{i,j}-\mathcal{M}^{i,j}) \rho_j\|_\infty=0
\end{equation}
when $i,j \equiv 1,2(\mathrm{mod} \ 3)$  with $|i-j|= 1$,
then $$\displaystyle \lim_{m \to \infty} \|({\mathcal K}_m -{\mathcal K}){\rho}\|_{\infty}=0.$$

Now, the limit conditions (\ref{convpunt1}) and (\ref{convpunt2}) can be immediately deduced ta\-king into account the  definitions (\ref{Kmij1}), (\ref{Kmij2}) and (\ref{Mmij}) of the finite rank ope\-ra\-tors $\mathcal{K}_m^{i,j}$ and  $\mathcal{M}_m^{i,j}$, the continuity of the kernels $K^{i,j}(\cdot,s)$ and $M^{i,j}(\cdot,s)$ and the convergence of the Radau quadrature rule on the set $C([0,1])$. \newline
Then,  using standard arguments (see, for instance, \cite[Theorem 12.8]{K}), it also follows that the sets $\{\mathcal{K}_m^{i,j}\}_m$ and $\{\mathcal{M}_m^{i,j}\}_m$, with $i$ and $j$ as above, are collectively compact and the proof is complete.
\end{proof}

\begin{proof}[Proof of Theorem \ref{maintheorem}]
From Theorem \ref{operL} we can deduce that the operators $-\pi \mathcal{I}+\tilde{\mathcal L}_m:{\mathcal X} \to :{\mathcal X}$ are bounded and pointwise convergent to $-\pi \mathcal{I}+{\mathcal L}$. Moreover, in virtue of the geometric series theorem, it follows  that for sufficiently large $m$ the operators $(-\pi \mathcal{I}+\tilde{\mathcal L}_m)^{-1}:{\mathcal X} \to {\mathcal X}$ exist and are uniformly bounded with
$$\|(-\pi \mathcal{I}+\tilde{\mathcal L}_m)^{-1}\| \leq \frac{1}{\pi-\displaystyle \sup_m\|\tilde{\mathcal L}_m\|}.$$ Consequently, taking also into account Theorem \ref{opercompatti}, it results (see, for instance, Theorem 10.8 and Problem 10.3 in \cite{K}) that for  sufficiently large $m$ the
operators  $$(-\pi \mathcal{I}+\tilde{\mathcal L}_m +{\mathcal K}_m)^{-1}:{\mathcal X} \to {\mathcal X}$$ exist and are uniformly bounded, i.e. the method is stable. Now,
by using the same arguments as in the proof of theorems 5 and 6 in \cite{FL}, one can assert (\ref{conditionnumber}) and finally show that the error estimate (\ref{errorestimate1}) holds true.
\end{proof}
In order to prove  Theorem \ref{terminenoto}, we recall the definition of the error of best polynomial approximation in uniform norm for a
function $f \in C([0,1])$
$$E_m(f)_\infty=\inf_{P_m \in \PP_m}\|(f-P_m)\|_\infty, $$
and the following error estimate for the Gauss-Legendre quadrature formula (\ref{Legendre})
\begin{equation}\label{uniformerror}
|e_m(f)| \leq \C E_{2m-1}(f), \quad \C \neq \C(m,f).
\end{equation}
Moreover we mention that for $f,g \in C([0,1])$ it results
\begin{equation}\label{errprod}
E_{2m-1}(fg)_\infty \leq 2 \|f\|_\infty E_{[\frac{2m-1}{2}]}(g)_\infty+E_{[\frac{2m-1}{2}]}(f)_\infty \|g\|_\infty.
\end{equation}

\begin{proof}[Proof of Theorem \ref{terminenoto}]
Since
$$\|(\bar{g}-\bar{g}_{M})(s)\|_\infty =\max_{i=1,\dots,3n}|\bar{g}_i(s)-\bar{g}_{M,i}(s)|,$$
in order to prove (\ref{estimateterminenoto}), we are going to estimate the $i$-th term $|\bar{g}_i(s)-\bar{g}_{M,i}(s)|$.
By (\ref{gsigma}), (\ref{gi2}) and (\ref{gmi}), it results
\begin{eqnarray}\label{g-gm}
|\bar{g}_i(s)-\bar{g}_{M,i}(s)| \hspace*{-0.5cm}&\leq &  \hspace*{-1cm} \sum_{\scriptsize{\begin{array}{c}k=1 \\ k\neq \ell \ \end{array}}}^{n} \left| \int_0^1 \phi_k(t) \log{|\tilde \sigma_k(s_i)-\tilde \sigma_\ell(t)|} dt  \right. \nonumber \\ &\hspace{1 cm}& \hspace*{-1cm}-\left.  \sum_{h=1}^M \lambda_{M,h}^L \phi_k(x_{M,h}^L) \log{|\tilde \sigma_k(s_i)-\sigma_\ell(x_{M,h}^L)|}  \right| \nonumber\\
\hspace*{-6cm}+ &  & \hspace*{-1.5cm}\left| \int_0^1 \phi_\ell(t) \log{|t-s_i|}  dt - \sum_{h=1}^M \lambda_{M,h}^L \phi_\ell(x_{M,h}^L)  \sum_{\nu=0}^{M-1} c_\nu(s_i) p_\nu(x_{M,h}^L)\right| \nonumber\\
\hspace*{-6cm}+ &  & \hspace*{-1.5cm} \left| \int_0^1 \phi_\ell(t) \delta_\ell(t,s_i) dt- \sum_{h=1}^M \lambda_{M,h}^L \phi_\ell(x_{M,h}^L) \delta_\ell(x_{M,h}^L,s_i) \right| \nonumber \\
\hspace*{-6cm} =:&  & \hspace*{-1.5cm}  \sum_{\scriptsize{\begin{array}{c}k=1 \\ k\neq \ell \end{array}}}^{n}\left| A_{k,\ell}(s_i)\right|+|B_\ell(s_i)|+|C_\ell(s_i)| \ .
\end{eqnarray}
Now let us consider $A_{k,\ell}(s_i)$, for $k$ fixed. By (\ref{uniformerror}) and (\ref{errprod}),  one can write
\begin{eqnarray}\label{Aij}
|A_{k,\ell}(s_i)| &\leq &  \C E_{2M-1}(\phi_k \log{|\tilde \sigma_k(s_i)-\tilde \sigma_\ell(\cdot)|})_\infty \nonumber \\  &\leq & \C  \left( E_{[\frac{2M-1}{2}]}(\phi_k)_\infty \|\log{|\tilde \sigma_k(s_i)-\tilde \sigma_\ell(\cdot)|}\|_\infty \right. \nonumber \\ & \hspace{1 cm}& +\left. 2 \|\phi_k\|_\infty E_{[\frac{2M-1}{2}]}(\log{|\tilde \sigma_k(s_i)-\tilde \sigma_\ell(\cdot)|})_\infty \right) \nonumber
\\ &\leq &  \frac{\C}{M},
\end{eqnarray}
being, by the assumptions, $\phi_k \in C^{1}$ and
$\log{|\tilde \sigma_k(s_i)-\tilde \sigma_\ell(\cdot)|} \in C^{2}$ for each $s_i \in [0,1]$.
Using the same arguments, one has that
\begin{equation}\label{Ci}
|C_\ell(s_i)| \leq \frac{\C}{M}
\end{equation}
being, under the hypotheses, $\phi_\ell \in C^{1}$ and $\delta_\ell(\cdot,s_i) \in C^{1}$ for each $s_i \in [0,1]$ and for each $\ell=1,\,\dots,\,n$.
Finally, by applying \cite[(3.11)]{MS98}, it results
\begin{equation}\label{Bi}
|B_\ell(s_i)| \leq \frac{\C}{M}.
\end{equation}
Hence,  by using (\ref{Aij}), (\ref{Ci}) and (\ref{Bi})  in (\ref{g-gm}), the thesis follows.
\end{proof}

\begin{proof}[Proof of Theorem \ref{harmonicerror}]
Estimate (\ref{stima1}) can be proved by proceeding as in the proof of Theorem 7 in \cite{FL}, taking also into account that the number of the involved quadrature nodes is a linear function of $m$. Concerning inequality (\ref{stima2}), we note that, by definition, it results
\begin{equation}\label{u-umN}
|u(x,y)-u_{m,N}(x,y)| \leq |u(x,y)-u_{m}(x,y)|+|R_N(x,y)|,
\end{equation}
where
\begin{eqnarray}
 R_N(x,y)&=& \sum_{k=1}^{n}  \left(\int_0^1 \phi_k(t) \log{|(\tilde \xi_k(t),\tilde \eta_k(t))-(x,y)|} dt \right. \nonumber \\ &\hspace{0cm} &\left. -\sum_{h=1}^N \lambda_{M,h}^L \phi_k(x_{M,h}^L) \log{|(\tilde \xi_k(x_{M,h}^L),\tilde \eta_k(x_{M,h}^L))-(x,y)|} \right)\\ & =:& \sum_{k=1}^{n} R_{N,k}(x,y).
\end{eqnarray}
Then, taking into account (\ref{corL}), we can write
\begin{eqnarray}
 \left|R_{N,k}(x,y)\right|& \leq & \frac{\C}{N} E_{2N-2}((\phi_k(\cdot) \log{|(\tilde \xi_k(\cdot),\tilde \eta_k(\cdot))-(x,y)|})^{'})_{\varphi,1} \nonumber \\ & \leq & \frac{\C}{N} \|(\phi_k(\cdot) \log{|(\tilde \xi_k(\cdot),\tilde \eta_k(\cdot))-(x,y)|})^{'} \varphi\|_1 \nonumber
\\ & \leq & \frac{\C}{N} \int_0^1 \sum_{j=0}^1
  (\log{|(\tilde \xi_k(t),\tilde \eta_k(t))-(x,y)|})^{(j)} \phi_k^{(1-j)}(t) t^{\frac{1}{2}} dt \nonumber \\ & \leq & \frac{\C}{ N} \sum_{j=0}^1
\end{eqnarray}
where $d=\displaystyle \min_{k=1,\ldots,n}d_k$ with $d_k=\displaystyle \min_{0 \leq t \leq 1}|(x,y)-(\tilde \xi_k(t),\tilde \eta_k(t))|$.
Hence, by using (\ref{stima1}) and (\ref{RN}) in (\ref{u-umN}), we get (\ref{stima2}).
\end{proof}

\section{Numerical Tests}\label{tests}
In this section we apply the  method described in Section \ref{the method} for the  nu\-me\-rical solution of some examples of the exterior Neumann problem on planar domains with corners. \newline
In each test, in order to give the boundary condition $f$, we choose  a test harmonic function $u$ and we perform the absolute error $\varepsilon_{m,N}(x,y)=|u(x,y)-u_{m,N}(x,y)|$ at the point $(x,y)\in \RR^2 \setminus D$ where $u_{m,N}$ is as in (\ref{umN}). Moreover, we also analyze the condition number in infinity norm of the matrix $A_m$ of the linear system (\ref{linearsystem2}). \newline
All the numerical results are obtained by approximating the right-hand sides by using (\ref{gmi}) with  $M=\frac{\nu_m}{2}$.

%\begin{example}\label{esempio_heart}
{\bf{Example 1.}}
\begin{figure}[!t]
\centering
\includegraphics[scale=0.5]{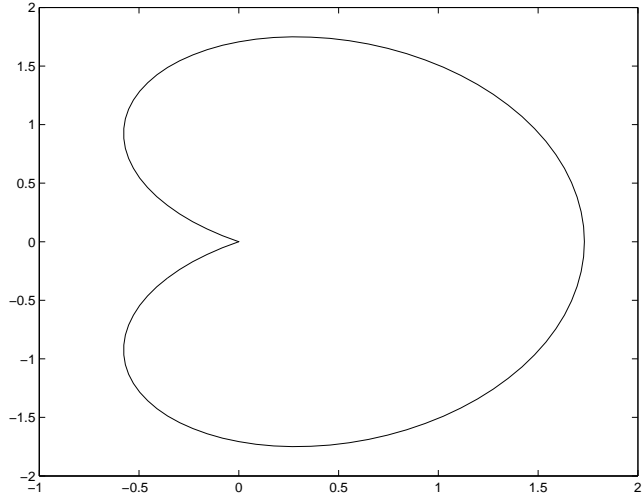}
\caption{The contour $\Sigma$ in Example 1 with $\phi=\frac{5}{3}\pi$
\label{bound_heart}}
\end{figure}
Let us consider a family of ``heart-shaped" domains (see Figure \ref{bound_heart}) bounded by the curves
$$\sigma(t)=\left(\begin{array}{c}\cos{(1+\frac{\phi}{\pi})\pi t}-\sin{(1+\frac{\phi}{\pi})\pi t}\\
\sin{(1+\frac{\phi}{\pi})\pi t}+\cos{(1+\frac{\phi}{\pi})\pi t}\end{array}\right)\left(\begin{array}{c}\tan{\frac{\phi}{2}}\\ 1\end{array}\right)
-\left(\begin{array}{c} \tan{\frac{\phi}{2}}\\ \cos{\pi t}\end{array}\right), \quad t \in [0,1], $$
where $\phi \in (\pi,2\pi)$ is the interior angle of the single outward-pointing corner $P_1=(0,0)$ and let us decompose the given boundary as shown in Subsection \ref{formulation} with $\delta$ in (\ref{delta1}) and (\ref{delta2}) given by  $\delta=3.87e-07$.
For this test we choose boundary data corresponding to the exact solution
$$u(P)=\log{|P-Q_1|}-\log{|P-Q_2|}, \quad Q_1=(0.5,0), \quad Q_2=(0.2,0).$$
Table \ref{table_heart} reports the results obtained by applying our method for  $\phi=\frac 5 3 \pi$ and $N=\frac{\nu_m}{2}$ while Figure \ref{cond_heart} shows the condition number in infinity norm of the matrix $A_m$ as a function of the interior angle $\phi$, confirming that
the estimate (\ref{conditionnumber}) holds true whatever the angle at the corner point.

\begin{table}[htb]
\caption{Numerical results for Example 1 with $\phi=\frac{5}{3}\pi$, $c=300$ and $\epsilon=10^{-3}$ \label{table_heart}}
{
\begin{tabular}{|c|c|c|c|c|c|c|c|}
\hline
$\mu_m$  & $\nu_m$ & $\varepsilon_{m,N}(-0.1,0)$ &  $\varepsilon_{m,N}(3,3)$ &$\varepsilon_{m,N}(-40,-50)$ & $\varepsilon_{m,N}(100,-100)$& $\mathrm{cond}(A_m)$ \\
\hline
8 & 32   &    6.62e-03  &  2.35e-05  &  4.52e-05  &  5.9e-05& 133.5\\ \hline
16 & 64   &   6.95e-03  &  1.89e-04  &  1.12e-05  &  5.3e-06& 25.86\\ \hline
32 & 128  &   6.78e-04  &  1.81e-05  &  1.05e-06  &  5.3e-07& 18.37\\ \hline
64 & 256 &    1.18e-05  &  3.19e-07  &  1.86e-08  &  9.2e-09& 18.32 \\ \hline
128 & 512 &   2.29e-06  &  6.10e-08  &  3.55e-09  &  1.8e-09& 18.32   \\ \hline
\end{tabular}
}
\end{table}

\begin{figure}[!t]
\centering
\includegraphics[scale=0.5]{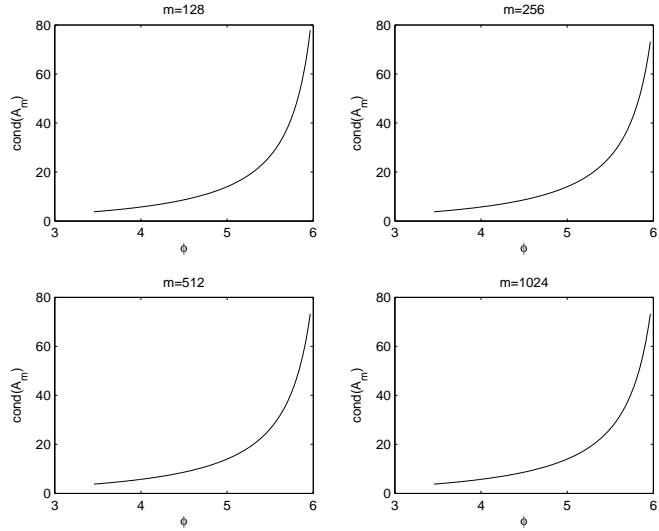}
\caption{Condition Numbers for  Example 1 with $c=300$ and $\epsilon=10^{-3}$
\label{cond_heart}}
\end{figure}
%\end{example}

{\bf Example 2.}
%\begin{example}\label{esempio_teardrop}
Let us consider a family of ``teardrop" domains (see Figure \ref{bound_teardrop}) bounded by the curves parameterized by
$$\sigma(t)= \left(2\sin{\pi t},-\tan{\frac{\phi}{2}}\sin{2\pi t}\right), \quad t\in[0,1],$$
where $\phi \in (0,\pi)$ is the interior angle of the single outward-pointing corner $P_1=(0,0)$ and let us choose the boundary data $f$ as the normal derivative of the following function
$$ u(x,y)=\arctan{\left(\frac{y-0.2}{x-0.8}\right)}-\arctan{\left(\frac{y}{x-0.8} \right)}. $$
Moreover, for this text we consider $\delta=5.37e-011$ in (\ref{delta1}) and (\ref{delta2}).
\begin{figure}[!t]
\centering
\includegraphics[scale=0.5]{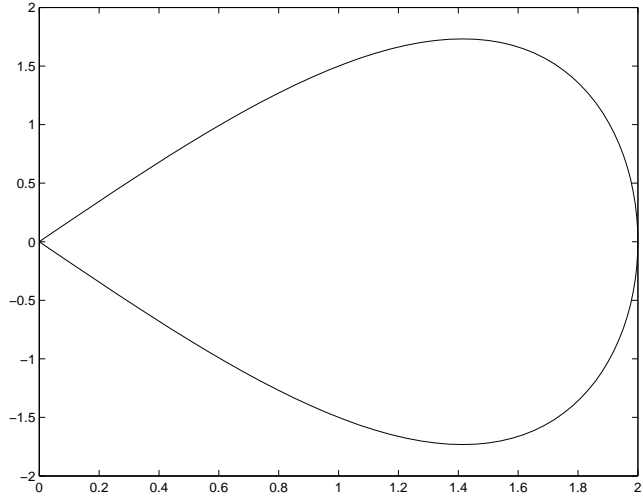}
\caption{The contour $\Sigma$ in Example 2 with $\phi=\frac{2}{3}\pi$
\label{bound_teardrop}}
\end{figure}
The numerical results reported in  Table \ref{table_teardrop1}, obtained for $\phi=\frac{2}{3}\pi$ and $N=\frac{\nu_m}{2}$, agree with the theoretical estimate (\ref{errorestimate1}). Moreover, Figure \ref{cond_teardrop} confirms that the sequence $\{cond(A_m)\}_{m \geq m_0}$ is uniformly bounded with respect to $m$, according to the theoretical estimate (\ref{conditionnumber}).

\begin{table}[htb]
\caption{Numerical results for Example 2 with $\phi=\frac{2}{3}\pi$,
 $c=100$ and $\epsilon=10^{-3}$ \label{table_teardrop1}}
{
\begin{tabular}{|c|c|c|c|c|c|c|c|}
\hline
$\mu_m$  & $\nu_m$ & $\varepsilon_{m,N}(-0.1,0)$ &  $\varepsilon_{m,N}(3,3)$ &$\varepsilon_{m,N}(-40,-50)$ & $\varepsilon_{m,N}(100,-100)$& $\mathrm{cond}(A_m)$ \\
\hline
8 & 32   & 1.44e-03  &  6.39e-04  &  1.47e-03 &   1.80e-03& 6.67 \\ \hline
16 & 64  & 7.43e-06  &  8.81e-06  &  4.34e-06 &   5.57e-06& 4.49\\ \hline
32 & 128 & 9.32e-08  &  2.54e-07  &  1.98e-08 &   8.05e-09& 4.16\\ \hline
64 & 256 & 8.24e-08  &  1.03e-08  &  8.14e-10 &   3.62e-10& 4.16\\ \hline
128 & 512& 2.14e-08  &  2.92e-09  &  2.29e-10 &   1.01e-10& 4.17 \\ \hline
\end{tabular}
}
\end{table}

\begin{figure}[!t]
\centering
\includegraphics[scale=0.5]{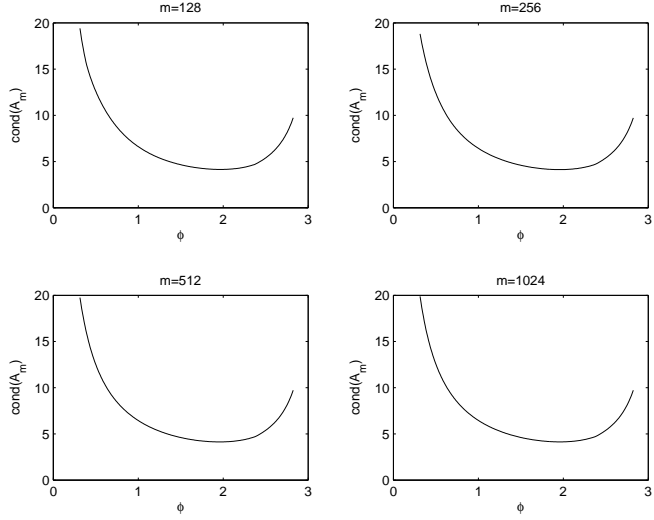}
\caption{Condition Numbers for  Example 2 with $c=100$ and $\epsilon=10^{-3}$
\label{cond_teardrop}}
\end{figure}
%\end{example}

{\bf Example 3.}
%\begin{example}\label{esempio_boomerang}
Let us consider a family of ``boomerang" domains (see Figure \ref{bound_boomerang}) having as boundaries the following curves
$$\sigma(t)= \left(\frac{2}{3}\sin{3\pi t},-\tan{\frac{\phi}{2}}\sin{2\pi t}\right), \quad t\in[0,1],$$
where $\phi \in (\pi,2\pi)$ is the interior angle of the single inward-pointing corner $P_1=(0,0)$. Table \ref{table_boomerang} shows the numerical results obtained in the case where the boundary data $f$ is the normal derivative of the  function
$$u(P)=\log{|P-Q_1|}-\log{|P-Q_2|}, \quad Q_1=(-0.1,0), \  Q_2=(-0.2,0),$$ the interior angle at $P_1$ is $\phi=\frac{3}{2}\pi$, $\delta=5.16e-08$ in (\ref{delta1}) and (\ref{delta2}) and $N=\nu_m$.
\begin{figure}[!t]
\centering
\includegraphics[scale=0.5]{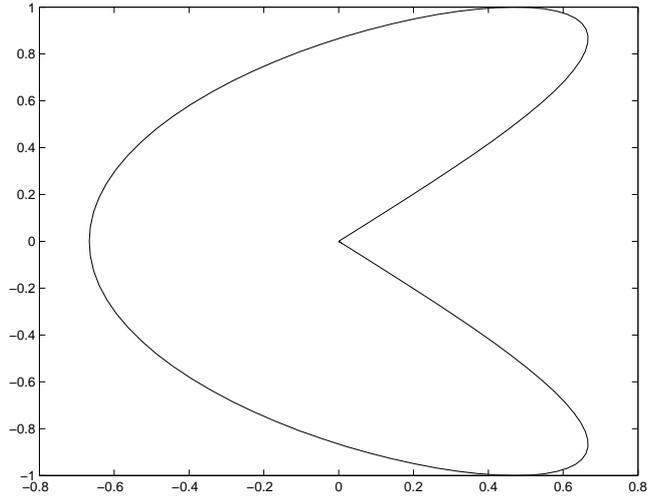}
\caption{The contour $\Sigma$ in Example 3 with $\phi=\frac{3}{2}\pi$
\label{bound_boomerang}}
\end{figure}

\begin{table}[htb]
\caption{Numerical results for Example 3 with $\phi=\frac{3}{2}\pi$,
 $c=100$ and $\epsilon=10^{-3}$\label{table_boomerang} }
{
\begin{tabular}{|c|c|c|c|c|c|c|c|}
\hline
$\mu_m$  & $\nu_m$ & $\varepsilon_{m,N}(0.2,0)$ &  $\varepsilon_{m,N}(3,3)$ &$\varepsilon_{m,N}(-40,-50)$ & $\varepsilon_{m,N}(100,-100)$& $\mathrm{cond}(A_m)$ \\
\hline
8 & 32    &     7.22e-03 &   2.66e-04 &   7.41e-06  &  3.48e-05 & 19.13\\ \hline
16 & 64   &    3.34e-04  &  1.62e-05  &  9.59e-07   & 4.87e-07  & 16.92\\ \hline
32 & 128  &    8.51e-05  &  4.05e-06  &  2.39e-07   & 1.21e-07  & 16.92\\ \hline
64 & 256  &     1.95e-05 &   9.34e-07 &   5.54e-08  &  2.80e-08 & 16.93\\ \hline
128 & 512 &    4.64e-06  &  2.22e-07  &  1.31e-08   & 6.67e-09  & 16.93\\ \hline
\end{tabular}
}
\end{table}

\begin{figure}[!t]
\centering
\includegraphics[scale=0.5]{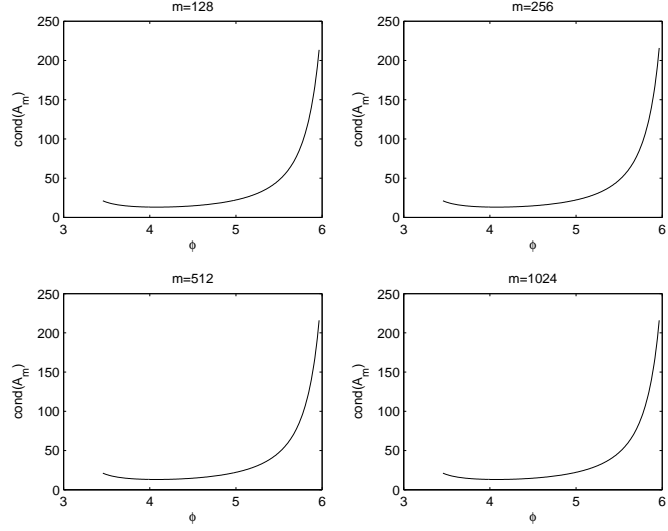}
\caption{Condition Numbers for  Example 3 with $c=300$ and $\epsilon=10^{-3}$
\label{cond_boomerang}}
\end{figure}
%\end{example}

{\bf Example 4.}
%\begin{example}\label{esempio_triangolo}
Let $D$ be the polygonal domain represented in Figure \ref{bound_triangolo} with vertices $(-5/4,-3/4)$, $(3/4,-3/4)$ and $(3/4, 5/4)$ and apply the method described in Section \ref{the method}  in the case when the exact solution of (\ref{Neumann}) is the following harmonic function
$$u(x,y)=\frac{x^2-y^2}{(x^2+y^2)^2}.$$
\begin{figure}[!t]
\centering
\includegraphics[scale=0.5]{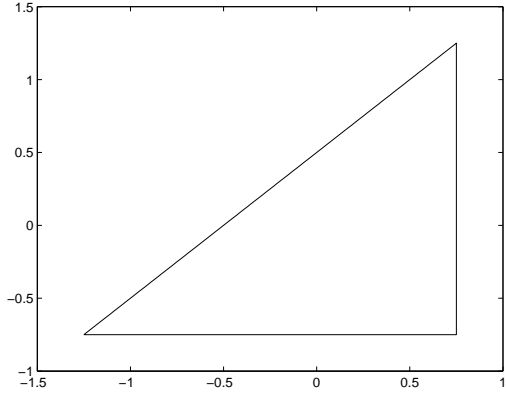}
\caption{The contour $\Sigma$ in Example 4
\label{bound_triangolo}}
\end{figure}
Table \ref{table_triangolo} contains the numerical results obtained with $N=\frac{\nu_m}{2}$.

\begin{table}[htb]
\caption{Numerical results for Example 4 with $c=100$ and $\epsilon=10^{-6}$ \label{table_triangolo} }
{
\begin{tabular}{|c|c|c|c|c|c|c|c|}
\hline
$\mu_m$  & $\nu_m$ & $\varepsilon_{m,N}(-1.5,1.5)$ &  $\varepsilon_{m,N}(2,2)$ &$\varepsilon_{m,N}(10,20)$ & $\varepsilon_{m,N}(100,100)$& $\mathrm{cond}(A_m)$ \\
\hline
8 & 32  & 7.11e-04 &  1.33e-02 & 1.79e-03 & 2.93e-04 & 166.39 \\ \hline
16 & 64   & 9.83e-04 &  2.78e-04 & 6.51e-05 & 6.70e-06 &  66.18 \\ \hline
32 & 128 & 1.41e-04 & 7.74e-06 & 6.94e-06 & 5.27e-07 & 20.40 \\ \hline
64 & 256  & 2.18e-06  & 3.38e-07 & 1.24e-07 &  1.12e-08 & 9.11 \\ \hline
128 & 512 & 6.93e-09   & 1.20e-09  & 4.16e-10 & 3.90e-11 & 8.81 \\ \hline
\end{tabular}
}
\end{table}
%\end{example}

\textbf{Remarks}

The numerical results, shown in this section, confirm the theoretical ones stated in Section \ref{the method}.
We can note that,  according to estimate (\ref{stima2}), for any fixed $m$, the error $\varepsilon_{m,N}(x,y)$, becomes smaller and smaller as well as the distance of the exterior point $(x,y)$ from the boundary is larger and larger. \newline
Moreover, the results put in evidence that, as stated in (\ref{conditionnumber}), the sequence $\{\mathrm{cond}(A_m)\}_{m \geq m_0}$ is uniformly bounded with respect to $m$. \newline
The computational cost of the proposed procedure of course grows with the number of corners of the domain. However, when
the requested precision is not too high (as it is usual in the applications), the dimension of the linear system
(\ref{linearsystem2}) is kept down.  Anyway, such system is still well conditioned also when
its dimension is larger, whatever the interior angles at the corner points.

\section*{Acknowledgments}
C. Laurita is partly supported by Istituto Nazionale di Alta Matematica, GNCS Project 2013 ``Metodi fast per la risoluzione numerica di sistemi di equazioni integro-differenziali''.

\bibliographystyle{plain}
\bibliography{ANM-refs}
\end{document}